\title{Overly determined agents prevent consensus in a generalized Deffuant model
	   on $\Z$ with dispersed opinions}
\author{Timo Hirscher\thanks{Research supported by a grant from the Swedish Research Council}
\\\normalsize Chalmers University of Technology}
\theoremstyle{break}
\newtheorem{theorem}{Theorem}[section]
\newtheorem{lemma}{Lemma}[section]
\newtheorem{definition}{Definition}
\newtheorem*{remark}{Remark}
\newtheorem{example}{Example}[section]
\let\c@proposition\c@theorem
\let\c@lemma\c@theorem
\let\c@example\c@theorem
\newenvironment{proof}{\noindent{\sc Proof:}}{\vspace{-0.5cm}~\hfill $\square$\vspace{0.5cm}}
\newenvironment{nproof}[1]{\noindent{\sc Proof #1:}}{\vspace{-1em}~\hfill $\square$\vspace{2em}}
\newcommand\N{\mathbb{N}}
\newcommand\R{\mathbb{R}}
\newcommand\Z{\mathbb{Z}}
\newcommand\E{\mathbb{E}\,}
\newcommand\Prob{\mathbb{P}}
\renewcommand\epsilon{\varepsilon}
\renewcommand\phi{\varphi}
\definecolor{darkblue}{rgb}{0,0,.5}
\begin{document}
\newpage
\maketitle
\begin{abstract}
During the last decades, quite a number of interacting particle systems have been introduced
and studied in the border area of mathematics and statistical physics. Some of these can be seen
as simplistic models for opinion formation processes in groups of interacting people. In the one
introduced by Deffuant et al.\ agents, that are neighbors on a given network graph, randomly meet
in pairs and approach a compromise if their current opinions do not differ by more than a given
threshold value $\theta$. We consider the two-sidedly infinite path $\Z$ as underlying graph and
extend former investigations to a setting in which opinions are given by probability distributions.
Similar to what has been shown for finite-dimensional opinions, we observe a dichotomy in the
long-term behavior of the model, but only if the initial narrow-mindedness of the agents is
restricted.
\end{abstract}



\section{Introduction}\label{intro}

The research field that became known as {\itshape opinion dynamics} originated from simple models
for interacting elementary particles established in statistical physics, introduced to figure out
how microscopic interaction rules lead to macroscopic properties of the whole system. Due to the
strong link between statistical mechanics and spatial stochastic processes, interest among
mathematicians was raised and in the course of a few decades an abundance of new models with
similar but qualitatively different interaction schemes was introduced and analyzed, primarily
by computer-based stochastic simulations. The survey article \cite{surv} gives a broad overview
of the different models and their analyses and applications.

Despite their radical limitations in terms of complexity, these models attracted more and more the
attention of the social sciences and were used to describe group behavior on an elementary level
and to explain real life phenomena. In 2000, Deffuant et al.\ \cite{Model} suggested a simple model
that features a bounded confidence restriction: Neighbors talk to each other in pairs and their
opinions are updated towards a compromise only if the opinions they hold just before they meet are
not further apart than a given threshold. This is meant to capture the realistic phenomenon
that people tend to modify their attitude on a specific topic when talking to others, but
not if they consider the views of their discussion partner as so alien as to seem like complete
nonsense.

\vspace*{1em}
In mathematical terms, the {\em Deffuant model} is structured as follows: First, we are
given a simple connected graph $G=(V,E)$, that shapes the underlying network. The
vertices are understood to represent agents holding individual opinions on a certain topic. The
edges of the graph are supposed to represent the connections between these individuals and entail
a possible mutual influence among neighbors. The vertex set $V$ can be either finite or countably
infinite. In the latter case the maximal degree in $G$ is commonly assumed to be bounded.

Then there are two model parameters: the already mentioned confidence bound $\theta>0$ and the
convergence parameter  $\mu\in(0,\tfrac 12]$, shaping the step size towards a compromise when two
opinions are updated. Opinions usually take values in $\R$. A higher-dimensional analog was
considered in \cite{multidim} and here we will extend the model further. For these generalizations,
we need to specify a metric $d$ that is used to measure the distance of two opinions and takes
over the task of the absolute value in the original model.

The first source of randomness is the configuration of initial opinions. Even though there have been
attempts to look at settings with dependent initial opinions (see for example Section 2.2 in \cite{Deffuant})
the usual setting is to take i.i.d.\ initial opinions which then evolve dependencies by interacting.
The opinion value at vertex $v\in V$ and time $t\geq0$ will be denoted by $\eta_t(v)$.

The second source of randomness in the model is the succession of pairwise encounters. On finite graphs,
the next pair of neighbors to meet is picked uniformly at random. On infinite graphs, the corresponding
equivalent is to assign i.i.d.\ Poisson processes on all edges of the graph: Whenever a {\em Poisson event}
occurs on the edge $e=\langle u,v \rangle$, i.e.\ a jump in the Poisson process associated with $e$,
the agents $u$ and $v$ interact in the following way: Assume the event happens at time $t$ and the
opinions of $u$ and $v$ just before are given by $\eta_{t-}(u)=\lim_{s\uparrow t}\eta_s(u)=:a$ and
$\eta_{t-}(v)=\lim_{s\uparrow t}\eta_s(v)=:b$ respectively. Then, depending on the distance of $a$ and
$b$, there might be an update according to the following rule:
\begin{equation*}
\eta_t(u) = \left\{ \begin{array}{ll}
a+\mu(b-a) & \mbox{if $d(a,b)\leq\theta$,} \\
a & \mbox{otherwise}
\end{array} \right.
\end{equation*}                     
and similarly \vspace{-0.75cm}\begin{align}\label{dynamics}\end{align}\vspace{-0.75cm}

\begin{equation*}
\eta_t(v) = \left\{ \begin{array}{ll}
b+\mu(a-b) & \mbox{if $d(a,b)\leq\theta$,} \\
b & \mbox{otherwise.}
\end{array} \right.
\end{equation*}
\vspace*{1em}

Given our assumptions, $E$ is countable, so there will almost surely be neither two simultaneous Poisson
events nor a limit point in time for the Poisson events on edges incident to one fixed vertex.
This guarantees the well-definedness of the process by \eqref{dynamics} for finite $G$. The
extension to infinite graphs with bounded degree is not immediately obvious but a standard argument,
see Thm.\ 3.9 in \cite{Liggett}.

When it comes to the long term behavior of the system, it is quite natural to ask whether the
agents will form a consensus to which all the opinions converge or not. Let us properly define and
distinguish the following two opposing asymptotics of the Deffuant model as time tends to infinity:

\begin{definition}\label{states}
\begin{enumerate}[(i)]
\item {\itshape Disagreement}\\
There will be finally blocked edges, i.e.\ edges $e=\langle u,v\rangle\in E$ s.t.
$$d(\eta_t(u),\eta_t(v))>\theta,$$
for all times $t$ large enough. Hence the vertices fall into different opinion groups,
that are incompatible with neighboring ones.
\item {\itshape Consensus}\\
The value at every vertex converges, as $t\to\infty$, to a common limit $l$, where
$$l=\begin{cases}\frac{1}{|V|}\sum_{v\in V}\limits\eta_0(v),&\text{if }G\text{ is finite}\\
                 \E\eta,&\text{if }G\text{ is infinite}\end{cases}$$
and $\mathcal{L}(\eta)$ denotes the distribution of the initial opinion values.\end{enumerate}
\end{definition}
Even though these two regimes intuitively seem to be complementary, for infinite graphs it is
far from obvious that the asymptotics of the model is necessarily given by one or the other
(cf.\ Def.\ 1.1 in \cite{Deffuant} and also the remark at the end of Section \ref{sec:td}).

In this paper, we are going to consider the two-sidedly infinite path $\Z$ as underlying graph,
i.e.\ $V=\Z$ and $E=\{\langle v,v+1\rangle, v\in\Z\}$. The first result for this setting was
published in 2011 and is due to Lanchier \cite{Lanchier}, who showed a sharp phase transition
from almost sure disagreement to almost sure consensus at $\theta=\frac12$, given initial opinions,
that are i.i.d.\ $\text{\upshape unif}([0,1])$. Shortly thereafter, Häggström \cite{ShareDrink}
reproved Lanchier's findings using a quite different approach; then Häggström and Hirscher
\cite{Deffuant} extended them to general univariate distributions for $\mathcal{L}(\eta)$.
In \cite{multidim}, the case of vector-valued opinions and different distance measures was
examined. 

One aspect that could be considered unrealistic in these models is the fact that even though
opinions are random, for a fixed realization they were given by numbers or vectors, hence
entirely determined -- not doing justice to the extremely common phenomenon of uncertainty in
people's opinions. In what follows, we are going to introduce and analyze a variant of the Deffuant
model on $\Z$, where the opinions are given by random absolutely continuous measures on $[0,1]$.
The support of these measure-valued opinions can be seen to represent uncertainty: the more
concentrated the measure, the more determined the agent.
\vspace*{1em}

As a general preparation for an extension of the model in this direction, in Section \ref{dist&conv}
we will introduce the total variation distance (which will be used as distance measure) and
recall a Strong law of large numbers (SLLN) for continuous densities, due to Rubin, replacing
the common SLLN which was a crucial ingredient in the case of finite-dimensional opinions.

The model with measure-valued opinions is outlined in Section \ref{measure_model}. We
consider random symmetric triangular distributions as initial opinions and find that for
this setting overly determined agents (i.e.\ agents whose initial opinion is concentrated on
sufficiently short intervals) prevent consensus for all $\theta\in[0,1)$, cf.\
Theorem \ref{unrestricted}.

The results for finite-dimensional opinion spaces listed above will be sketched in more detail
in Section \ref{background}. Central ideas from \cite{ShareDrink} will be presented as
they prove to be useful in our setting as well.

In Section \ref{sec:td} the main result for the setting with unrestricted symmetric triangular
distributions, Theorem \ref{unrestricted}, is proved: We show that the behavior of the model is
trivial in this case as extremely determined agents will have and keep a total variation distance
close to $1$ to their neighbors' opinions.

If we put a restriction on the initial determination of
the agents by disallowing triangular distributions that have a support of length less than
a fixed value $\gamma$, the familiar phenomenon of a phase transition in $\theta$ from a.s.\ 
disagreement to a.s.\ consensus reappears. This case, as well as the precise dependency of
the threshold value $\theta_\mathrm{c}$ on the parameter $\gamma$ are elaborated in Section
\ref{sec:rrtd}. In the final section, we breifly discuss possible other initial configurations
and earlier attempts to incorporate inhomogeneous open-mindedness of the agents.

\section{Distance and convergence of absolutely continuous random measures}\label{dist&conv}
As indicated, we want to generalize the Deffuant model on $\Z$ further by looking at opinions
that are no longer numbers or vectors but probability distributions instead. These random
distributions can be seen to shape indeterminacy in the agents: Even with initial opinion
profile and sequence of encounters fixed, the opinion of an individual at a given time is not
a fixed value but a probability measure.
Initially, the agents are independently assigned random measures from a common distribution.
When they meet and their current opinion measures do not differ by more than $\theta$, with respect
to a fixed metric on probability measures, the new opinions will be given by convex combinations
of the old ones, just as described in (\ref{dynamics}).

In order to quantify the difference between two distributions there are quite a few metrics to choose
from. The so-called total variation distance is among the most common ones.

\begin{definition}
	Let $\mu$ and $\nu$ be two probability distributions on a set $S$. The {\em total variation distance}
	between the two measures is then defined as
	$$\lVert\mu-\nu\rVert_{\mathrm{TV}}:=\sup_{A\subseteq S}|\mu(A)-\nu(A)|.$$
\end{definition}

As the total variation distance of two probability distributions is a number in $[0,1]$, the
non-trivial values for $\theta$ lie in $(0,1)$ for this model. Further, note that the total
variation distance of two probability distributions $\mu$ and $\nu$, that are absolutely
continuous with respect to the Lebesgue measure on $\R$ and have densities $f$ and $g$
respectively, is given by
$$\lVert\mu-\nu\rVert_{\mathrm{TV}}=\frac12 \int_{\R}\big|f(x)-g(x)\big|\;\mathrm{d}x.$$
In addition, if $\mu$ and $\nu$ are distributions on $[0,1]$, we can immediately conclude
$\lVert\mu-\nu\rVert_{\mathrm{TV}}\leq\tfrac12\, \lVert f-g \rVert_{\infty}$,
where $\lVert f\rVert_\infty= \sup_{x\in[0,1]} \big|f(x)\big|$ denotes the supremum norm on 
$[0,1]^{\R}$.

\vspace*{1em}
To be able to transfer the findings from the Deffuant model on $\Z$ featuring real- or vector-valued
opinions, we further need an equivalent for the Strong law of large numbers (SLLN) geared towards the
densities of random measures. The following result of Rubin \cite{SLLN} serves our purposes.

Let $U, U_1, U_2,\dots$ denote a sequence of independent, identically distributed random variables
with values in an arbitrary space $Y$. Given a compact topological space $X$, consider a map
$f: X\times Y\to\R$, that is measurable in the second argument for each $x\in X$.
\begin{theorem}[SLLN for continuous densities]\label{SLLN}
	If there exists an integrable function $g$ on $Y$ such that $|f_y(x)|<g(y)$ for all $x\in X$ and $y\in Y$, as
	well as a sequence of measurable sets $(S_i)_{i\in\N}$ with
	$$\Prob\Big(U\in \bigcap_{i\in\N}S_i^\mathrm{\;c}\Big)=0,$$
	and the property that $\{f_y(\,.\,),\; y\in S_i\}$ is equicontinuous on $X$ for all $i\in\N$,
	then with probability 1,
	$$\lim_{n\to\infty} \frac1n \sum_{i=1}^n f_{U_i}(x)= \E f_U(x)$$
	uniformly in $x\in X$ and the limit function is continuous.
\end{theorem}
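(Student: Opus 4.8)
The plan is to bootstrap the ordinary SLLN: apply it pointwise on a finite $\epsilon$-net of $X$, use the equicontinuity hypothesis to interpolate between the net points, and use the integrable majorant $g$ to absorb the contribution of those $U_i$ falling outside the ``good'' sets $S_j$. As a first, harmless reduction I would assume the $S_i$ are increasing: replacing $S_i$ by $S_1\cup\dots\cup S_i$ leaves $\bigcup_i S_i$ unchanged, so still $\Prob(U\in\bigcup_i S_i)=1$, and a finite union of equicontinuous families is again equicontinuous, so all hypotheses persist. Write $f_y(\cdot):=f(\cdot,y)$, $F(x):=\E f_U(x)$ --- finite since $|f_U(x)|<g(U)$ with $g$ integrable --- and $A_n(x):=\frac1n\sum_{i=1}^n f_{U_i}(x)$. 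For a fixed index $i$, split $A_n=A_n^{(i)}+B_n^{(i)}$ according to whether $U_k\in S_i$ or not, and set $F_i(x):=\E\big(f_U(x)\mathbbm{1}_{\{U\in S_i\}}\big)$ and $\delta_i:=\E\big(g(U)\mathbbm{1}_{\{U\notin S_i\}}\big)$; since $S_i\uparrow$ with $U$ almost surely in the union, dominated convergence gives $\delta_i\to0$, and $|F(x)-F_i(x)|\le\delta_i$ for every $x$.

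The tail term is controlled uniformly in $x$ by $|B_n^{(i)}(x)|\le\frac1n\sum_{k=1}^n g(U_k)\mathbbm{1}_{\{U_k\notin S_i\}}$, whose right-hand side converges almost surely to $\delta_i$ by the ordinary SLLN. For $A_n^{(i)}$, fix $\epsilon>0$: equicontinuity of $\{f_y:y\in S_i\}$ gives every point of $X$ an open neighbourhood on which all these functions vary by less than $\epsilon$, and compactness of $X$ extracts a finite subcover $V_1,\dots,V_m$ with centres $x_1,\dots,x_m$. Since for $y\in S_i$ and $x'\in V_j$ we have $|f_y(x')-f_y(x_j)|<\epsilon$ --- a bound that survives multiplication by $\mathbbm{1}_{\{y\in S_i\}}$, the excluded $y$ merely contributing the zero function --- averaging shows that $A_n^{(i)}$, and (after taking expectations) $F_i$, each oscillate by at most $\epsilon$ on every $V_j$. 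Applying the ordinary SLLN at the finitely many points $x_1,\dots,x_m$ gives $A_n^{(i)}(x_j)\to F_i(x_j)$ a.s.\ for all $j$, so a triangle inequality through a covering neighbourhood yields $\limsup_n\sup_{x\in X}|A_n^{(i)}(x)-F_i(x)|\le 3\epsilon$ almost surely. Running $\epsilon$ through $1,\tfrac12,\tfrac13,\dots$ and intersecting the countably many almost sure events, $\sup_x|A_n^{(i)}-F_i|\to0$ a.s.

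Combining the pieces, $\sup_x|A_n-F|\le\sup_x|A_n^{(i)}-F_i|+\sup_x|B_n^{(i)}|+\sup_x|F_i-F|$, so $\limsup_n\sup_x|A_n-F|\le 0+\delta_i+\delta_i=2\delta_i$ almost surely; intersecting over $i\in\N$ and letting $i\to\infty$ proves the claimed uniform convergence. For continuity of the limit: the neighbourhood estimate above shows each $F_i$ is continuous on $X$ (one has $|F_i(x')-F_i(x)|\le\epsilon$ whenever $x'$ lies in the equicontinuity neighbourhood of $x$), and $F_i\to F$ uniformly since $|F-F_i|\le\delta_i\to0$, so $F$ is continuous.

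I expect the only real work to be the layered ``almost sure'' bookkeeping --- one null set per net point, per $\epsilon$ in the chosen sequence, per index $i$ --- all of which is countable and hence harmless; the reduction to increasing $S_i$ and the observation that each individual $f_y$ with $y\in S_i$ is continuous are routine. A measurability remark is in order (that the relevant suprema over $X$ are random variables), but this causes no difficulty, since every supremum estimate above is obtained from, or dominated by, evaluations at a countable set of points, so it is never actually necessary that $\sup_{x\in X}$ be measurable.
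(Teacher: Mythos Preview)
The paper does not supply its own proof of this statement: Theorem~\ref{SLLN} is quoted from Rubin~\cite{SLLN} and used as a black box, so there is nothing in the paper to compare your argument against. That said, your proof is correct and follows the classical route one would expect for a uniform SLLN: truncate to an equicontinuous family $S_i$, control the remainder via the integrable majorant $g$ and the ordinary SLLN, pass to a finite $\epsilon$-net of the compact space $X$, apply the pointwise SLLN at the net points, and interpolate by equicontinuity; finally let $\epsilon\downarrow 0$ and $i\to\infty$ through countable sequences so that the union of null sets stays null. The reduction to increasing $S_i$ is harmless exactly as you say, the bound $\limsup_n\sup_x|A_n-F|\le 2\delta_i$ is clean, and your closing remark on measurability (that every supremum you actually need is dominated by a countable maximum) is the right way to sidestep that issue. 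In short: a sound, self-contained argument for a result the paper merely cites.
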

The measure with density $\E f_U$ is commonly called {\em intensity} or {\em intensity measure},
see e.g.\ Section 1.2 in \cite{rdmmeasures} for a more general introduction.
\pagebreak

\section{Initial opinions given by random triangular distributions}\label{measure_model}

For concreteness, let us pick the initial opinions from a specific class of absolutely continious
distributions. A rather natural choice, departing from real-valued opinions (which can be seen as
Dirac delta measures), are symmetric triangular distributions on random subintervals of $[0,1]$, the
endpoints of which are chosen uniformly from $[0,1]$.

More precisely, let us consider the initial opinions $\{\eta_0(v),\;v\in\Z\}$ to be picked in the
following way:
Consider $\{U(v),\;v\in\Z\}$ to be an i.i.d.\ sequence of $\text{\upshape unif}([0,1]^2)$ random vectors.
The node $v$ will be assigned  an initial opinion given by the random absolutely continuous probability measure
with density
\begin{align}
f_0^{(v)}(x)&=\begin{cases}
0,&x\notin(m,M)\\
\big(\tfrac{2}{M-m}\big)^2\cdot(x-m),&x\in(m,\tfrac{m+M}{2}]\\
-\big(\tfrac{2}{M-m}\big)^2\cdot(x-M),&x\in(\tfrac{m+M}{2},M)\\
\end{cases}\\
&=\label{densdef}\tfrac{2}{|y-z|}\cdot\big(1-\tfrac{2}{|y-z|}\cdot\big|x-\tfrac{y+z}{2}\big|\big)^+,\quad x\in[0,1],
\end{align}
where $U(v)=(y,z)$ and $m:=\min\{y,z\},\ M:=\max\{y,z\}$, see Figure \ref{dens}.

\begin{figure}[H]
	\centering
	\includegraphics[scale=0.83]{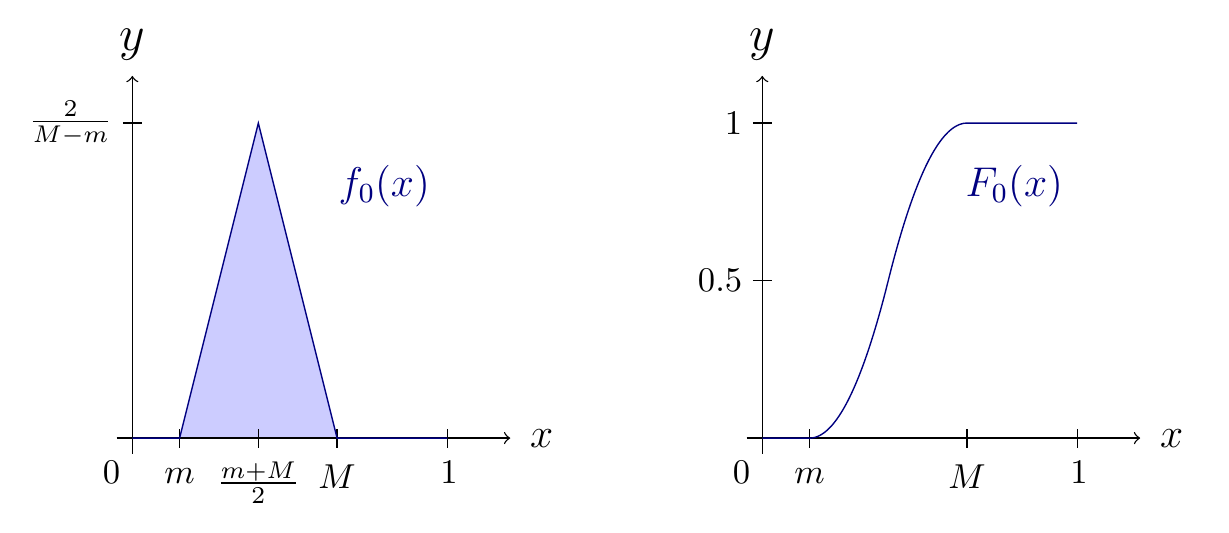}
	\caption{The density and distribution function of a symmetric triangular distribution on
		$[m,M]$.}\label{dens}
\end{figure}

Seen from a different angle, to get the initial opinion of a fixed agent, we first choose a central
opinion value $C$ uniformly from $[0,1]$ and then a spread for the support of the distribution uniformly
among $[0,\min\{C,1-C\}]$. That this procedure is equivalent to the one described above is an immediate
consequence of the change of variable formula, see the proof of Lemma \ref{density-la} (especially Figure
\ref{T}) for more details.

Note that this model features two qualitatively different forms of extreme initial opinions: One the one hand
-- as in the original model -- the agents can have opinions lying at the edges of the spectrum (i.e.\
concentrated close to 0 or 1 in this case), on the other an individual opinion can be very determined
in the sense that $U(v)$ is close to the diagonal (i.e.\ $|y-z|$ very small), which provokes a highly concentrated
density and necessarily a large distance to a vast majority of possible initial opinions.

This effect is quite realistic: Irrespectively of their opinion being exceptional or mainstream,
people that are extremely narrow-minded or determined are usually neither willing to consider the opinion
nor to accept the arguments of others, let alone to compromise. In this sense, even though the mathematics are
closely related to the case of finite-dimensional opinions, the extension of the model to measure-valued
opinions introduces an additional real life phenomenon.

For symmetric triangular distributions on $[0,1]$ without any restriction on the minimal length of
their support, we are going to show that the model exhibits a trivial behavior:

\begin{theorem}\label{unrestricted}
	Consider the Deffuant model on $\Z$, where the initial opinions are given by independently
	assigned random triangular distributions as described in \eqref{densdef}. Then for all $\theta\in[0,1)$,
	the system almost surely approaches disagreement in the long run if the total
	variation distance is used to measure the distance between two opinions.
\end{theorem}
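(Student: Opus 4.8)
The plan is to exhibit, almost surely, a pair of edges that are never used at all, with a highly concentrated agent caught between them; such an agent can never be bridged, so at least one of its incident edges is finally blocked and Definition~\ref{states}(i) applies. Two preliminary observations drive everything. First, for absolutely continuous probability measures with densities $f,g$ one has $\lVert\mu-\nu\rVert_{\mathrm{TV}}=1-\int_\R\min\{f,g\}\,\mathrm{d}x$, and an encounter between $u$ and $v$ at which the confidence bound is met replaces the two densities by the convex combinations $(1-\mu)f+\mu g$ and $(1-\mu)g+\mu f$. Consequently: (i) the support of the opinion at any vertex can only grow in time; and (ii) for a fixed interval $I\subseteq[0,1]$ the quantity $\mu^{(u)}_t(I)+\mu^{(v)}_t(I)$ is preserved by an $\langle u,v\rangle$-encounter and the two new values lie between the old minimum and maximum, so that for the dynamics restricted to any connected subgraph of $\Z$ — in particular for the true dynamics on a stretch both of whose boundary edges happen never to fire — the supremum of $\mu^{(w)}_t(I)$ over the vertices $w$ of that subgraph is non-increasing in $t$. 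Second, writing $\beta:=1-\theta>0$, a frozen agent $v$ whose density is supported on $I$ satisfies $\int\min\{f^{(v)}_0,f^{(u)}_t\}\le\mu^{(u)}_t(I)$ for every neighbour $u$; hence $u$ and $v$ cannot interact while $\mu^{(u)}_t(I)<\beta$, and their total variation distance stays $\ge 1-\mu^{(u)}_t(I)>\theta$.

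This yields a localized blocking lemma. Call a vertex $v$ \emph{trapped} if there are indices $a<v<b$ such that the edges $\langle a-1,a\rangle$ and $\langle b,b+1\rangle$ are never used, the density $f^{(v)}_0$ from \eqref{densdef} is supported on some interval $I$, and $\mu^{(w)}_0(I)<\beta$ for every $w\in\{a,\dots,b\}\setminus\{v\}$ (e.g.\ because $\supp f^{(w)}_0\cap I=\varnothing$). A trapped vertex never interacts with either neighbour: were $t$ the first such encounter, say with $v+1$, then before $t$ the vertex $v$ has not interacted, so on $[0,t)$ the true dynamics restricted to $\{a,\dots,v-1\}$ and to $\{v+1,\dots,b\}$ are closed subsystems (their outer edges are silent and the edges to $v$ have not acted), whence by (ii) the supremum of $\mu^{(\cdot)}_s(I)$ over either block stays $<\beta$ for $s<t$; in particular $\mu^{(v+1)}_{t-}(I)<\beta$, contradicting the interaction criterion of the previous paragraph. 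Thus both edges at $v$ are never used and, $v$ remaining frozen, carry total variation $\ge 1-\mu^{(v\pm1)}_t(I)>\theta$ for all $t$, so a trapped vertex witnesses disagreement.

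It remains to prove that a trapped vertex exists almost surely, which is the heart of the matter: the defining event constrains certain edges for \emph{all} time, i.e.\ the cumulative influence of the rest of $\Z$. Here I would argue recursively. Using the reparametrization of \eqref{densdef} by a central value $C\in[0,1]$ and a spread in $[0,\min\{C,1-C\}]$ together with Borel--Cantelli, one shows that $\Z$ almost surely carries an infinite, sparse family of ``extremely determined'' vertices — support length below a prescribed $\epsilon$, central value confined to a short interval around $\tfrac12$ — each of which is flanked on both sides, out to the nearest vertex carrying mass $\ge\beta$ in its tiny support, only by vertices whose support avoids that tiny interval. A segment bounded by two consecutive such extreme vertices is, by the lemma applied to each in turn, a closed subsystem: the outer extreme vertex of one segment freezes, so its incident edges are silent, which is exactly the hypothesis needed to treat the adjacent segment. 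Turning this fixed-point picture into a genuine proof — say by ordering the extreme vertices outward from a chosen one, or by an independence/percolation estimate over far-apart windows so that Borel--Cantelli produces a single fully self-contained blocking configuration almost surely — is the step I expect to be the main obstacle; once it is in place, Definition~\ref{states}(i) gives disagreement almost surely for every $\theta\in[0,1)$.
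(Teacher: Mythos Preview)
Your trapping lemma is sound: once a segment is genuinely closed, the supremum of $\mu_t^{(\cdot)}(I)$ over it cannot rise, and a frozen concentrated agent inside stays isolated. The gap is exactly where you flag it, and it is not a technicality. Your definition of ``trapped'' requires boundary edges $\langle a-1,a\rangle$ and $\langle b,b+1\rangle$ that never result in an update --- but that is precisely the conclusion you are after, so the recursive scheme has no base case. The Borel--Cantelli alternative over far-apart windows does not rescue this: the event ``the boundary edge of this window is blocked for all time'' depends on the entire configuration and on all Poisson clocks, so distinct windows are not independent, and you cannot manufacture a self-contained finite configuration whose blocking is decided locally. The underlying issue is that your supremum-monotonicity tool is too coarse: it only controls $\mu_t^{(w)}(I)$ on a \emph{closed} system, whereas somewhere on each half-line there will a.s.\ sit vertices with $\mu_0^{(w)}(I)$ close to $1$, and you have no mechanism to dilute their influence.

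The paper sidesteps this by projecting to a real-valued problem and invoking the SAD/flatness machinery (Lemmas~\ref{collection} and~\ref{flat}). Set $F_t^{(v)}(\delta):=\eta_t(v)([0,\delta])$; these are $[0,1]$-valued, i.i.d.\ at $t=0$, with $\E F_0^{(v)}(\delta)=\int_0^\delta\phi<\epsilon$ for $\delta$ small. With positive probability $v-1$ is $\epsilon$-flat to the left and $v+1$ is $\epsilon$-flat to the right in this one-dimensional sense, while $F_0^{(v)}(\delta)=1$. Flatness is a condition on \emph{all} one-sided Ces\`aro averages of the initial data; combined with the SAD representation of $\eta_t(v\pm1)$ as a convex combination of initial opinions with unimodal weights bounded by $1/(d+1)$ at distance $d$, it keeps $F_t^{(v\pm1)}(\delta)$ below $2\epsilon$ for all $t$ \emph{without any assumption on silent edges}. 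That is the missing idea: flatness absorbs the contribution of arbitrarily distant bad vertices through the weight decay, whereas your supremum argument cannot. Ergodicity then upgrades positive probability of a blocked edge to almost-sure disagreement.
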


For the proof of this result, we refer the reader to Section \ref{sec:td}. Since this setting allows to
reuse results from the finite-dimensional case, we first want to give a brief overview of these in the following
section.

\section{Background}\label{background}

As mentioned in the introduction, the first analytic result about consensus formation in the Deffuant model
on $\Z$ was established by Lanchier \cite{Lanchier} and deals with opinion profiles that are initially given
by an i.i.d.\ sequence of $\mathrm{unif}([0,1])$ random variables. The distance between two opinions was
taken to be the absolute value of their difference. Häggström \cite{ShareDrink} used different
techniques to reprove and slightly sharpen this result.
His arguments were later adapted to accommodate other univariate
initial distributions as well, leading to an analog covering all marginal distributions that
have a first moment $\E\eta_0\in\R\cup\{-\infty,+\infty\}$, see Thm.\ 2.2 in \cite{Deffuant}:

\begin{theorem}\label{gen}
	Consider the Deffuant model on the graph $(\Z,E)$, where $E=\{\langle v,v+1\rangle, v\in\Z\}$,
	with fixed parameter $\mu\in(0,\tfrac12]$. Let the initial configuration be given by an i.i.d.\ sequence
	of real-valued random variables, having the common distribution $\mathcal{L}(\eta_0)$, and the distance of two
	opinions by the absolute value of their difference.
\begin{enumerate}[(i)]
	\item Given a bounded distribution $\mathcal{L}(\eta_0)$ with expected value $\E\eta_0$, let $[a,b]$ denote
	the smallest closed interval containing its support. If $\E\eta_0$ does not lie in the support, let 
	$I\subset[a,b]$ denote the maximal, open interval with $\E\eta_0\in I$ and $\Prob(\eta_0\in I)=0$.
	In this case, set $h$ to be the length of $I$, otherwise set $h=0$.
	
	Then the critical value for $\theta$, marking the phase transition from a.s.\ disagreement to a.s.\ 
	consensus, becomes $\theta_{\mathrm{c}}=\max\{\E\eta_0-a,b-\E\eta_0,h\}$.
	The common limit value in the supercritical regime is $\E\eta_0$.
	\item Suppose the distribution $\mathcal{L}(\eta_0)$ is unbounded but its expected value exists, i.e.\
	$\E\eta_0\in\R\cup\{-\infty,+\infty\}$.
	Then the Deffuant model with arbitrary fixed parameter $\theta\in(0,\infty)$ will a.s.\ behave
	subcritically, meaning that disagreement will be approached in the long run.
\end{enumerate}
\end{theorem}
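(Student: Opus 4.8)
\section*{Proof proposal}

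The plan is to reduce both parts to a single structural picture: the finally blocked edges cut $\Z$ into maximal \emph{clusters}, the opinions inside each cluster converge to a common value, and the strong law of large numbers together with the support interval $[a,b]$ then dictates exactly which gaps between adjacent clusters can persist. Throughout I would lean on three standing facts. Since every update replaces the two interacting values by convex combinations of them, all opinions stay inside $[a,b]$ for all times (respectively inside the smallest interval containing $\supp\mathcal L(\eta_0)$ in the unbounded case). The whole construction --- i.i.d.\ initial opinions together with i.i.d.\ Poisson clocks on the edges --- is stationary and ergodic under the shift on $\Z$, so every shift-invariant event has probability $0$ or $1$; applied to ``the density of finally blocked edges is positive'' this yields the disagreement/consensus dichotomy and lets me reason through the single quantity $\Prob(\langle 0,1\rangle\text{ finally blocked})$. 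Finally, a bounded quadratic-variation (energy) argument shows that each $\eta_t(v)$ converges a.s.\ to a limit $\eta_\infty(v)$, so the long-run configuration is well defined.

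First I would set up the cluster structure. Across a finally blocked edge the net transported value is eventually frozen, so the never-finally-blocked edges partition $\Z$ into maximal segments; inside each segment the dynamics eventually runs as a Deffuant system with no surviving blocked edge and thus drives all its opinions to one common value, namely the mass-average of the initial opinions over that segment. Adjacent segment limits must differ by more than $\theta$, and every limit lies in $[a,b]$. Here the strong law of large numbers enters: over any long block the empirical average of the initial opinions is close to $\E\eta_0$, so any segment whose limit is far from $\E\eta_0$ must be short, and a positive density of blocked edges can be sustained only by persistent ``anomalous'' segments. These are precisely the three mechanisms in $\theta_{\mathrm c}=\max\{\E\eta_0-a,\;b-\E\eta_0,\;h\}$: a low segment pinned near $a$ (possible when $\theta<\E\eta_0-a$), a high segment pinned near $b$ (when $\theta<b-\E\eta_0$), and segments split across the support gap $I$ (when $\theta<h$).

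The gap mechanism is clean, so I would dispatch it first: if $\theta<h$, then a convex combination of two values $\le\inf I$ is again $\le\inf I$, two values on opposite sides of $I$ are at distance $\ge h>\theta$ and never interact, and hence opinions that start $\le\inf I$ stay there while those starting $\ge\sup I$ stay there. Since $\E\eta_0\in I$ forces both $\Prob(\eta_0\le\inf I)>0$ and $\Prob(\eta_0\ge\sup I)>0$, a.s.\ infinitely many adjacent low--high pairs occur, each a permanently blocked edge. For the endpoint mechanisms I would, adapting the techniques of H\"aggstr\"om \cite{ShareDrink}, construct a one-sided blocking barrier: when $\theta<\E\eta_0-a$ one locates, with positive probability, a long initial segment whose average lies so far below $\E\eta_0-\theta$ that, even after absorbing all value that can ever cross its right boundary, it cannot be pulled to within $\theta$ of its right neighbour, so that boundary edge remains blocked. \emph{Making this barrier persistent} --- quantitatively bounding how far the segment can be dragged upward over infinite time --- is the main obstacle, and I expect to handle it through the monotone coupling of the dynamics (raising initial values can only raise all later values) combined with comparison to an explicitly solvable extremal configuration; the symmetric argument covers $\theta<b-\E\eta_0$.

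For the supercritical direction I would prove that $\theta>\theta_{\mathrm c}$ leaves no finally blocked edge: the same monotone comparison shows that a low or high cluster whose limit sat more than $\theta$ from its neighbour would force $\E\eta_0-a$, $b-\E\eta_0$, or $h$ to exceed $\theta$, contradicting the cluster constraints of the second paragraph. Hence there is a single cluster, and the strong law of large numbers identifies its common limit as $\E\eta_0$, giving consensus. Part (ii) is then the limiting case $a=-\infty$ or $b=+\infty$ of the barrier construction: unboundedness on either side makes the corresponding threshold infinite, so for every fixed $\theta\in(0,\infty)$ arbitrarily extreme opinions occur with positive density and produce a.s.\ infinitely many persistently blocked edges, whence the model is subcritical irrespective of whether $\E\eta_0$ is finite or equal to $\pm\infty$.
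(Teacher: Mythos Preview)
The paper does not prove this theorem; it is quoted as background (Thm.\ 2.2 of \cite{Deffuant}), and Section~\ref{background} only sketches the machinery behind it --- H\"aggstr\"om's \emph{Sharing a drink} (SAD) representation and the notion of $\epsilon$-flat vertices (Lemmas~\ref{collection} and~\ref{flat}). So the relevant comparison is between your outline and that machinery.

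Your proposal has a genuine gap. The step you flag as ``the main obstacle'' --- making the low barrier persistent --- you plan to handle via ``the monotone coupling of the dynamics (raising initial values can only raise all later values)''. This monotonicity is \emph{false} for the Deffuant model. Because of the confidence threshold, raising one initial value can suppress an interaction that would otherwise have occurred. Concretely, take $\mu=\tfrac12$, $\theta=0.55$, three sites with initial values $(0,\,0.5,\,0.6)$, and let the edges $\langle 2,3\rangle$ then $\langle 1,2\rangle$ ring. After the two updates site~$1$ sits at $0.275$. Now raise the third initial value to $0.7$: after $\langle 2,3\rangle$ rings, site~$2$ is at $0.6$, so the $\langle 1,2\rangle$ update is blocked and site~$1$ stays at $0$. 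Raising an initial value strictly lowered a later value. Without monotonicity your barrier argument, and with it the entire subcritical half of part~(i), does not go through; the ``comparison to an explicitly solvable extremal configuration'' has nothing to stand on.

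This is exactly why the approach in \cite{ShareDrink, Deffuant} is built differently. Instead of monotone comparison, one writes $\eta_t(v)$ as a convex combination $\sum_u \xi(u)\,\eta_0(u)$ where the (random) weights $\xi$ form a \emph{unimodal} SAD-profile (Lemma~\ref{collection}). Unimodality plus the $\epsilon$-flatness condition~\eqref{rflat} is what pins the opinion of a flat vertex near $\E\eta_0$ for all time (Lemma~\ref{flat}), replacing the monotonicity you invoke. The subcritical argument then sandwiches an extreme initial value between two one-sidedly flat neighbours; the supercritical argument uses two-sided flatness. Your high-level picture (ergodicity, energy, SLLN, the three mechanisms behind $\theta_{\mathrm c}$) is sound, but the load-bearing technical device must be the SAD/flatness combination rather than a monotone coupling that the model does not possess.
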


With an appropriate adaptation to the more involved geometry of vector-valued opinions, the main
ideas in \cite{ShareDrink} further served to establish similar results for the Deffuant model
on $\Z$ with opinion space $\R^d,\ d>2$, and more general distance measures, see 
Thm.\ 3.15 and Thm.\ 4.11 in \cite{multidim}.

 \vspace*{1em}\noindent
  Since the same line of reasoning was used in both \cite{ShareDrink}, \cite{Deffuant} and \cite{multidim}
  to derive the results for finite-dimensional opinion spaces we just mentioned, let us now take a closer
  look on the used key concepts and crucial auxiliary results. They form the base for most of the
  conclusions we will be able to draw in the case of infinite-dimensional opinions.

  In \cite{ShareDrink}, Häggström presents two central ideas, whose effect turns out to be highly limited
  to paths, but combined they prove to be quite powerful in the analysis of the Deffuant model on
  the infinite path $\Z$. The first one is the notion of {\em flat points}:
  
  \begin{definition}
   Consider the initial i.i.d.\ configuration $\{\eta_0(u)\}_{u\in\Z}$ with common marginal
   distribution $\mathcal{L}(\eta_0)$ on an opinion space $X$ (e.g.\ $\R$ or $\R^d$), which we consider to
   be equipped with the metric $\rho$.
   Under the premise that the mean $\E\eta_0$ of the initial distribution exists and given $\epsilon>0$,
   a vertex $v\in\Z$ is called {\em$\epsilon$-flat to the right} (with respect to the initial configuration),
   if for all $n\geq0$:
   \begin{equation}\label{rflat}
     \frac{1}{n+1}\sum_{u=v}^{v+n}\eta_0(u)\in B_\epsilon\big(\E\eta_0\big),
   \end{equation}
   where $B_r(x):=\{y\in X,\;\rho(x,y)\leq\epsilon\}$ denotes the (closed) $\rho$-ball around $x\in X$
   with radius $r>0$.
   A vertex $v$ is called {\em$\epsilon$-flat to the left} if the above condition is met with the sum
   running from $v-n$ to $v$ instead. Finally, $v$ is called {\em two-sidedly $\epsilon$-flat} if for all $m,n\geq0$
   \begin{equation}\label{tflat}
     \frac{1}{m+n+1}\sum_{u=v-m}^{v+n}\eta_0(u)\in B_\epsilon\big(\E\eta_0\big).
   \end{equation}
  \end{definition}\vspace*{1em}
   The crucial role vertices, that are one- or two-sidedly $\epsilon$-flat with respect to the initial
   configuration, can play in the further evolution of the configuration becomes more obvious in the
   light of the second key idea, the non-random pairwise averaging procedure Häggström \cite{ShareDrink}
   proposed to call {\em Sharing a drink} (SAD) on $\Z$.
   
   Glasses are put along the infinite path at all integers; the one at site $0$ is full, all others are empty. 
   Similarly to the Deffuant model, neighbors interact and share, but now we skip randomness and confidence
   bound: The procedure starts with the initial profile $\{\xi_0(v)\}_{v\in\Z}$,
   given by $\xi_0(0)=1$ and $\xi_0(v)=0$ for all $v\neq0$. In each step, we choose an edge, along which an
   update of the form \eqref{dynamics} is executed; more precisely, if we are given the profile
   $\{\xi_n(v)\}_{v\in\Z}$ after step $n$ and choose $\langle u,u+1\rangle$ for the next round, we get
   \begin{equation}\label{transf}\begin{array}{rl}\xi_{n+1}(u)&\!\!\!=\,(1-\mu)\,\xi_{n}(u)+\mu\,\xi_{n}(u+1),\\
                 \xi_{n+1}(u+1)&\!\!\!=\,\mu\,\xi_{n}(u)+(1-\mu)\,\xi_{n}(u+1),\\
                 \xi_{n+1}(v)&\!\!\!=\,\xi_{n}(v)\qquad\text{for all }v\notin\{u,u+1\}.\end{array}
   \end{equation}
   The resulting profiles, after we have performed this procedure a finite number of rounds, will be called
   {\em SAD-profiles}. Besides the facts that they feature only finitely many non-zero elements, the elements
   are all positive and sum to 1, there are less obvious properties that these profiles share which we will
   collect in the following lemma (for proofs, see Lemmas 2.2, 2.1 and Thm.\ 2.3 in \cite{ShareDrink}):
   \begin{lemma}\label{collection}
   	Consider the SAD-procedure on the infinite path $\Z$, started in vertex $v$, i.e.\ with
   	$\xi_0(u)=\delta_v(u),\ u\in V$. Then we get the following:
   	\begin{enumerate}[(i)]
   		\item All achievable SAD-profiles are unimodal.
   		\item If the vertex $v$ only shares the water to one side, it will remain a mode of the SAD-profile.
   		\item The supremum over all achievable SAD-profiles started with $\delta_v$ at another vertex $w$ equals $\tfrac{1}{d+1}$,
   		where $d$ is the graph distance between $v$ and $w$.
   	\end{enumerate}
   \end{lemma}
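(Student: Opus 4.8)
The three statements all rest on one elementary fact about the update \eqref{transf}: since $\mu\le\tfrac12$, averaging two values $a\ge b$ into $a'=(1-\mu)a+\mu b$ and $b'=\mu a+(1-\mu)b$ yields $a\ge a'\ge b'\ge b$, i.e.\ the update keeps the (weak) order of the two sites it touches and pulls both values into the closed interval spanned by them. A second fact I would use throughout is that the source never dries up: $\xi_n(v)$ can decrease only through an update on an edge incident to $v$, and then by a factor at least $1-\mu$, so $\xi_n(v)\ge(1-\mu)^n>0$ and $v$ always lies in the support. For part (i) the plan is to use the characterization that a finitely supported profile $\xi\colon\Z\to[0,\infty)$ is unimodal exactly when every super-level set $\{u:\xi(u)\ge\ell\}$ is a (possibly empty) block of consecutive integers. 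The profile $\delta_v$ has this property, so by induction it is enough to check that it survives one update on an edge $\langle u,u+1\rangle$: only the values at $u,u+1$ change, their order is preserved and both stay between the old values, so for every level $\ell$ the super-level set is either unchanged or gains/loses exactly one of the endpoints $u,u+1$; since the old super-level set met $\{u,u+1\}$ in $\emptyset$, $\{u\}$, $\{u+1\}$ or $\{u,u+1\}$, each such change keeps it a block.

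For part (ii) assume, without loss of generality, that $v$ shares water only to the right, so every update on an edge incident to $v$ is on $\langle v,v+1\rangle$. Then no water ever crosses $\langle v-1,v\rangle$, hence $\xi_n\equiv0$ on $\{\dots,v-2,v-1\}$, and it suffices to show that $\xi_n$ stays non-increasing on $\{v,v+1,\dots\}$ --- for then $v$ is automatically a mode. I would prove this by induction on the number of updates: an update on $\langle a,a+1\rangle$ with $a\ge v$ pulls the larger value $\xi_n(a)$ down and the smaller value $\xi_n(a+1)$ up, keeps their order, and leaves $\xi_n(a-1)$ (if $a>v$) and $\xi_n(a+2)$ untouched, so checking the four inequalities around $a-1,a,a+1,a+2$ closes the step.

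For part (iii) I would treat the two bounds separately; after reflecting if necessary, assume $w=v+d$ with $d:=\mathrm{dist}(v,w)\ge1$ (the case $d=0$ being trivial). For the lower bound, restrict all updates to the $d$ edges of the sub-path on $\{v,v+1,\dots,w\}$: no mass escapes, and running these updates in round-robin order forever is iterated pairwise averaging on a finite connected graph and hence converges to consensus; each step being doubly stochastic preserves the mean, so the limit is the uniform profile on the $d+1$ sites. Thus $\xi_n(w)\to\tfrac1{d+1}$ and the supremum over achievable profiles is at least $\tfrac1{d+1}$. For the matching upper bound I would combine unimodality from (i) with the persistence of mass at $v$: if the current profile is non-increasing along $v,v+1,\dots,w$ --- in particular whenever its mode has not moved past $v$ towards $w$ --- then $\xi(w)=\min_{0\le j\le d}\xi(v+j)\le\tfrac1{d+1}\sum_{j=0}^{d}\xi(v+j)\le\tfrac1{d+1}$, which is exactly the mechanism behind (ii).

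The remaining case --- the bulk of the water has moved strictly between $v$ and $w$, or past $w$ --- is the main obstacle, since there unimodality alone only gives $\xi(w)\le\tfrac1{m-w+1}$, respectively $\tfrac1{w-m+1}$, for the mode $m$, and these need not be $\le\tfrac1{d+1}$ when $m$ is close to $w$. The plan is to show that detours of water to the left of $v$ (and to the right of $w$) can never raise $\xi(w)$ above the value attainable using only the sub-path $\{v,\dots,w\}$; concretely, I would set up a monotonicity/coupling argument --- reflecting the portion of the configuration lying left of $v$ onto the right half-line, or comparing the given update sequence with one confined to the sub-path --- so as to reduce once more to the one-sided, non-increasing situation of (ii), where the ``minimum $\le$ average'' estimate applies. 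Establishing this reduction cleanly is the crux; once it is in place, the two bounds combine to give $\sup\xi(w)=\tfrac1{d+1}$.
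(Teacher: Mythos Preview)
The paper does not prove this lemma; it simply cites Lemmas~2.1, 2.2 and Theorem~2.3 of H\"aggstr\"om's original paper \cite{ShareDrink} for parts (ii), (i) and (iii) respectively. So there is no in-paper argument to compare against, only the cited source.

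Your arguments for (i) via preservation of super-level blocks and for (ii) via the induction on the non-increasing property to the right of $v$ are correct and essentially reproduce H\"aggstr\"om's proofs of Lemmas~2.2 and 2.1. The lower bound in (iii) by round-robin averaging on the finite sub-path is also fine.

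The gap you yourself flag in the upper bound of (iii) is real and is precisely the substance of H\"aggstr\"om's Theorem~2.3. As you correctly observe, unimodality together with $\xi_n(v)>0$ is \emph{not} sufficient once the mode has drifted into $(v,w]$: a unimodal profile such as $(\epsilon,\epsilon,\tfrac12-\epsilon,\tfrac12-\epsilon)$ on $\{v,v+1,v+2,v+3\}$ has $v$ in its support yet puts nearly $\tfrac12>\tfrac13$ at $w=v+2$; ruling out such profiles requires something specific to the SAD dynamics beyond (i) and (ii). Your proposed remedy --- a monotone coupling showing that deleting all updates outside $\{v,\dots,w\}$ can only increase the value at $w$ --- is a reasonable strategy, but it is itself the non-trivial step and you have not carried it out. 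In particular, the naive pointwise comparison does not close by a one-line induction: an update on $\langle v-1,v\rangle$ can \emph{raise} $\xi(v)$ (whenever the mode currently sits to the left of $v$), and this increase then propagates to the right, so one cannot simply say that left detours are harmless coordinate by coordinate. H\"aggstr\"om's proof of the upper bound does combine (i) and (ii) with a genuinely additional inductive argument exploiting the SAD structure; your sketch stops just short of providing that ingredient, so part (iii) remains incomplete as written.
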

     
   The connection to the Deffuant model is established in Lemma 3.1 in \cite{ShareDrink}: The opinion value
   $\eta_t(0)$ at any given time $t>0$ can be written as a weighted average of the initial opinions, where the
   weights are given by the (random) SAD-profile which is dual to the dynamics in the Deffuant model in the sense
   that the order of updates has to be reversed.

   Combining this link with the concept of $\epsilon$-flatness makes it possible to derive the following
   crucial auxiliary results (which are obvious generalizations of intermediate results, established in the
   proofs of Prop.\ 5.1, as well as of Lemma 6.3 in \cite{ShareDrink}):
   
 \begin{lemma}\label{flat}
 		Consider the Deffuant model on $\Z$ with initial configuration be given by an i.i.d.\ sequence
 		of random variables having the common distribution $\mathcal{L}(\eta_0)$.
 	\begin{enumerate}[(i)]
 		\item If vertex $v$ is $\epsilon$-flat to the right with respect to the	initial configuration
 		      and does not interact with vertex $v-1$, its opinion stays inside $B_\epsilon\big(\E\eta_0\big)$.
 		      The same holds for $\epsilon$-flatness to the left and $v+1$ in place of $v-1$.
 		\item If vertex $v$ is two-sidedly $\epsilon$-flat with respect to the initial configuration,
 		      its opinion value will stay inside $B_{6\epsilon}\big(\E\eta_0\big)$, irrespectively of
 		      the dynamics.
 	\end{enumerate}
  \end{lemma}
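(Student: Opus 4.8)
The plan is to reduce both parts to the duality between the Deffuant dynamics on $\Z$ and the Sharing-a-drink procedure, combined with the properties of SAD-profiles collected in Lemma~\ref{collection}. The starting observation is that, as in Lemma~3.1 of \cite{ShareDrink}, the opinion $\eta_t(v)$ at any fixed time can be expressed as a convex combination $\sum_u \xi(u)\,\eta_0(u)$, where $\xi$ is a (random) SAD-profile started at $v$, provided no update was ever suppressed by the confidence bound along the dual trajectory. Since we only claim that the opinion \emph{stays inside} a ball, and balls $B_\epsilon(\E\eta_0)$ are convex, it suffices to bound weighted averages of the initial opinions over SAD-weights, so the confidence bound only helps us here and can be ignored in the worst case.

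For part~(i), suppose $v$ is $\epsilon$-flat to the right and never interacts with $v-1$ up to time $t$. Then the dual SAD-profile giving $\eta_t(v)$ is supported on $\{v,v+1,v+2,\dots\}$ only, since the glass at $v$ only ever shares water to the right. I would write $\eta_t(v)=\sum_{n\geq 0}\xi(v+n)\,\eta_0(v+n)$ with $\xi\geq 0$ summing to $1$, and use Abel summation to re-express this as a convex combination of the prefix averages $A_k:=\frac{1}{k+1}\sum_{u=v}^{v+k}\eta_0(u)$. Concretely, letting $w_k=\sum_{n\geq k}\xi(v+n)$ be the tail sums, one gets $\eta_t(v)=\sum_{k\geq 0}(w_k-w_{k+1})(k+1)\,A_k$ after rearranging — here the coefficients $(w_k-w_{k+1})(k+1)$ are nonnegative because SAD-profiles started with a single full glass sharing to one side are nonincreasing in the distance from the source by unimodality (Lemma~\ref{collection}(i)--(ii)), hence $w_k/w_{k+1}$ behaves well; and they sum to $1$ by a telescoping identity. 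Since every $A_k\in B_\epsilon(\E\eta_0)$ by $\epsilon$-flatness, convexity of the ball finishes this direction. The symmetric statement for left-flatness is identical after reflecting $\Z$.

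For part~(ii), the profile giving $\eta_t(v)$ can now spread to both sides, so I would split the weight into its left part and its right part, $\eta_t(v)=\lambda L + (1-\lambda) R$ with $\lambda=\sum_{u<v}\xi(u)$ (discarding the mass at $v$ into either side), where $L$ and $R$ are the normalized weighted averages over $\{\dots,v-1\}$ and $\{v,v+1,\dots\}$ respectively. Each of $L$, $R$ is again a convex combination of one-sided prefix averages of the \emph{same} initial configuration, but now those prefix averages must be controlled using two-sided flatness: $\frac{1}{m+n+1}\sum_{u=v-m}^{v+n}\eta_0(u)\in B_\epsilon(\E\eta_0)$ for all $m,n\geq 0$ only directly controls \emph{centered} windows, not one-sided ones. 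The trick is that a one-sided prefix average $\frac{1}{n+1}\sum_{u=v}^{v+n}\eta_0(u)$ can be written as a signed combination of two centered averages around $v$, and bounding the error terms costs a bounded multiplicative factor; chasing the worst case through this algebra is exactly what produces the constant $6$ in the statement (it is the same bookkeeping as in Lemma~6.3 of \cite{ShareDrink}). So the main obstacle is purely this combinatorial estimate: showing that every one-sided partial average of a two-sidedly $\epsilon$-flat configuration lies within $6\epsilon$ (in the metric $\rho$) of $\E\eta_0$, uniformly, and then invoking convexity of $B_{6\epsilon}(\E\eta_0)$ once more to conclude.
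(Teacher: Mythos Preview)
Your approach to part~(i) is essentially the paper's (which in turn is H\"aggstr\"om's): SAD duality plus unimodality with mode at $v$ plus Abel summation. One small slip: with $w_k=\sum_{n\geq k}\xi(v+n)$ you have $w_k-w_{k+1}=\xi(v+k)$, so the coefficients you wrote, $(w_k-w_{k+1})(k+1)=\xi(v+k)(k+1)$, neither sum to~$1$ nor are they the ones that arise from Abel summation. The correct coefficients are $(\xi(v+k)-\xi(v+k+1))(k+1)$; these are nonnegative precisely because the one-sided SAD profile is nonincreasing away from $v$, and they telescope to~$1$. With that correction your part~(i) is fine.

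Part~(ii) has a genuine gap. Your plan is to split the SAD weight at $v$ into a left block and a right block and then run the Abel-summation argument on each side. But the restricted weights $\xi(v),\xi(v+1),\dots$ are in general \emph{not} monotone from $v$: the mode of the unimodal SAD profile can sit at some $v+k$ with $k>0$. In that case Abel summation produces a \emph{signed}, not convex, combination of the prefix averages $A_k=\tfrac{1}{k+1}\sum_{u=v}^{v+k}\eta_0(u)$, and your conclusion fails. (Indeed, if your claim were correct it would give $B_\epsilon$, not $B_{6\epsilon}$: two-sided $\epsilon$-flatness with $m=0$ already puts every one-sided prefix average $A_k$ into $B_\epsilon$, so no enlargement of the radius would be needed.) You have also misdiagnosed the obstacle: it is not that two-sided flatness ``only controls centered windows'' --- it controls all windows containing $v$, including the one-sided ones --- but rather that the relevant windows, once you split at the true mode, need not contain $v$.

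The missing ingredient is Lemma~\ref{collection}(iii): if the mode of the dual SAD profile sits at distance $d$ from $v$, its height there is at most $\tfrac{1}{d+1}$. H\"aggstr\"om's argument (Lemma~6.3 in \cite{ShareDrink}) splits at the mode, writes each side as a convex combination of averages over intervals starting at the mode, and then uses the height bound to compare those to averages over intervals that do contain $v$ (hence controlled by two-sided flatness). It is this comparison step, quantified via the $\tfrac{1}{d+1}$ bound, that produces the factor~$6$. Your sketch should invoke (iii) explicitly and anchor the decomposition at the mode rather than at $v$.
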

	Without much further work, these findings can be used to analyze the behavior of the model featuring
	unrestricted symmetric triangular distributions, as we will see in the following section.

\section{Overly determined agents prevent consensus}\label{sec:td}

As the expectation of the initial distribution played a central role in the model featuring real- or
vector-valued opinions, we first have to get our hands on its counterpart in the context of random measures,
the intensity, before we can set about proving Theorem \ref{unrestricted}.

\begin{lemma}\label{density-la}
 Consider the absolutely continuous random measure $\eta$ to be given by the density
 \begin{equation}\label{randdens}
  f_U(x)=\tfrac{2}{|y-z|}\cdot\big(1-\tfrac{2}{|y-z|}\cdot\big|x-\tfrac{y+z}{2}\big|\big)^+,\quad x\in[0,1],
 \end{equation}
 where $U=(y,z)$ is taken uniformly from the unit square $[0,1]^2$, as introduced in \eqref{densdef}.
 Then its intensity measure (commonly denoted $\E\eta$) is given by the density
 \begin{equation*}
 \phi(x)=\begin{cases}-8\,\big[(1-x)\,\ln(1-x)+x\,\big(1-\ln(2)\big)\big],& x\in[0,\tfrac12]\\
                      -8\,\big[x\,\ln(x)+(1-x)\,\big(1-\ln(2)\big)\big],& x\in[\tfrac12,1]
 \end{cases}.
 \end{equation*}
\end{lemma}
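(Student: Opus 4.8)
The plan is to compute the intensity density $\phi$ straight from its definition $\phi(x)=\E f_U(x)$, i.e.\ by averaging the triangular densities \eqref{randdens} over $U=(y,z)\sim\mathrm{unif}([0,1]^2)$, after switching to the more convenient parametrization of each triangular distribution by the centre and half-width of its support.

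Since $f_U\ge 0$, Fubini gives $\phi(x)=\int_{[0,1]^2}f_{(y,z)}(x)\,\mathrm{d}y\,\mathrm{d}z$ for every $x\in[0,1]$, and the integrand depends on $(y,z)$ only through $C:=\tfrac{y+z}{2}$ and $h:=\tfrac{|y-z|}{2}$: in these variables, \eqref{randdens} reads $f_{(y,z)}(x)=\tfrac1h\bigl(1-\tfrac{|x-C|}{h}\bigr)^{\!+}$. The pair $(\min\{y,z\},\max\{y,z\})$ has density $2$ on $\{0\le m\le M\le 1\}$, and the affine map $(m,M)\mapsto\bigl(\tfrac{m+M}{2},\tfrac{M-m}{2}\bigr)$ has constant Jacobian determinant $2$; hence $(C,h)$ is distributed with density $4$ on the triangle $\{(C,h):0\le h\le\min\{C,1-C\}\}$ (this is easily visualised in the $(C,h)$-plane), so that
\[
  \phi(x)\;=\;4\int_0^1\!\!\int_0^{\min\{C,\,1-C\}}\tfrac1h\Bigl(1-\tfrac{|x-C|}{h}\Bigr)^{\!+}\,\mathrm{d}h\,\mathrm{d}C .
\]

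Replacing $(y,z)$ by $(1-y,1-z)$ shows $\phi(x)=\phi(1-x)$, so it suffices to treat $x\in[0,\tfrac12]$. For such an $x$ and a fixed $C$, the inner integrand is supported on $|x-C|<h\le\min\{C,1-C\}$, an interval that is non-empty precisely when $C\in\bigl(\tfrac x2,\tfrac{1+x}{2}\bigr)$; on it the inner integral equals $\int_a^b\bigl(\tfrac1h-\tfrac{a}{h^2}\bigr)\,\mathrm{d}h=\ln\tfrac ba+\tfrac ab-1$ with $a=|x-C|$, $b=\min\{C,1-C\}$. Since for $x\in(0,\tfrac12)$ the breakpoints satisfy $\tfrac x2<x\le\tfrac12<\tfrac{1+x}{2}$, I would split the $C$-integral at $C=x$ (where $|x-C|$ changes form) and at $C=\tfrac12$ (where $\min\{C,1-C\}$ switches branch), obtaining three integrals whose integrands, after expanding $\ln\tfrac ba$ and simplifying the rational parts, are combinations of $\ln C$, $\ln(x-C)$, $\ln(C-x)$, $\ln(1-C)$ and constants. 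Each has an elementary antiderivative (e.g.\ $\int\ln(C-x)\,\mathrm{d}C=(C-x)\ln(C-x)-C$ and $\int\tfrac xC\,\mathrm{d}C=x\ln C$); upon inserting the limits, the logarithmic terms attached to the interior breakpoints $x$ and $\tfrac12$ cancel pairwise, and using $\ln\tfrac{1-x}{2}=\ln(1-x)-\ln 2$ the surviving pieces collect into $\phi(x)=-8\bigl[(1-x)\ln(1-x)+x(1-\ln 2)\bigr]$. The case $x\in[\tfrac12,1]$ then follows from $\phi(x)=\phi(1-x)$.

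The only genuine work lies in this last step: decomposing the domain correctly, tracking the signs of the various $C\ln C$-, $(1-C)\ln(1-C)$- and $(C-x)\ln(C-x)$-type contributions at the four endpoints (with the convention $u\ln u\to 0$ as $u\downarrow 0$ handling the evaluation at $C=x$), and verifying that everything except the claimed closed form cancels. As independent sanity checks one confirms that the two branches agree at $x=\tfrac12$ (both equal $8\ln 2-4$), that $\phi(0)=0$, as it must be since the random distribution has no mass at the endpoints, and that $\int_0^1\phi(x)\,\mathrm{d}x=1$.
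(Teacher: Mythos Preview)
Your proposal is correct and follows essentially the same approach as the paper: both pass to the parametrization by centre and half-width (the paper's $(u,v)$ are your $(C,h)$), obtain the same factor $4$ from the change of variables, and then reduce the computation to an elementary but tedious double integral that is left largely to the reader. The only difference is the order of integration---the paper integrates over the centre $u$ first and decomposes the domain according to ranges of the half-width $v$ (its sets $B_1,B_2,B_3$), whereas you evaluate the half-width integral in closed form $\ln(b/a)+a/b-1$ first and then split the centre integral at $C=x$ and $C=\tfrac12$; this is a cosmetic variation rather than a different strategy.
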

\vspace*{-1em}
\begin{figure}[H]
	\hspace{3.5cm}
	\includegraphics[scale=0.8]{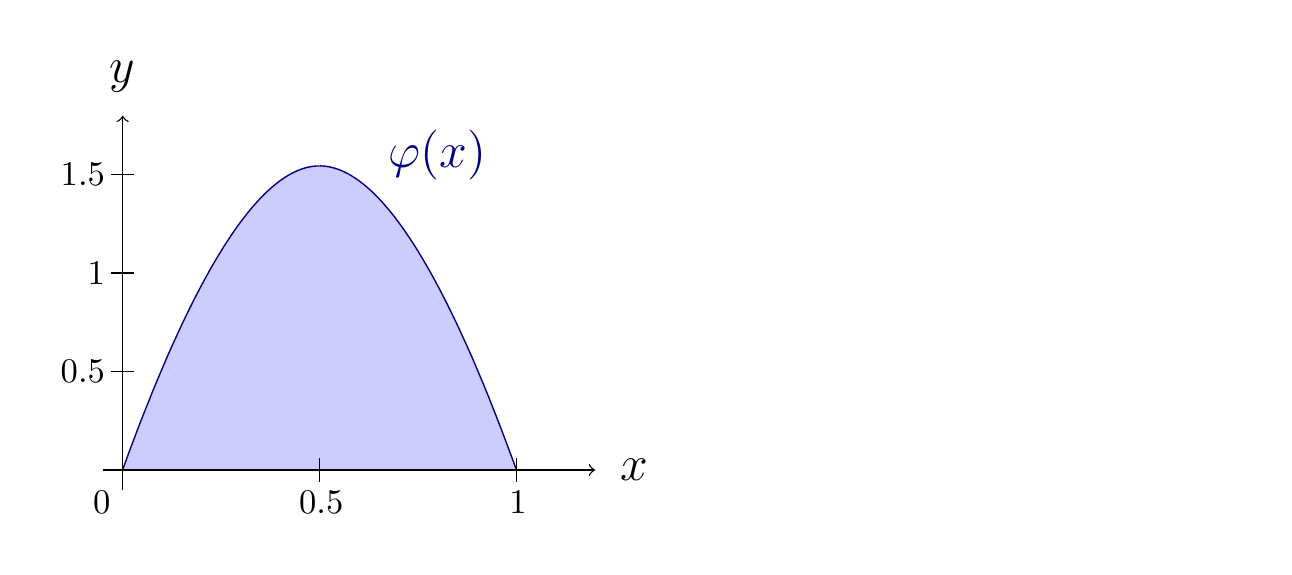}
	\caption{Density of the intensity measure corresponding to random symmetric triangular distributions
		on $[0,1]$.}
\end{figure}

\begin{proof}
	Fix $x\in(0,1)$. First of all, by symmetry, we can take $U$ to be uniform on the set
	$A:=\{(y,z)\in\R^2,\ 0\leq z\leq y\leq 1\}$. To further simplify the calculations, let us consider
	the simple linear transform $T((y,z))=\frac12\,(y+z,y-z),$ depicted in Figure \ref{T} below.
	\begin{figure}[h]
		\centering
		\includegraphics[scale=0.9]{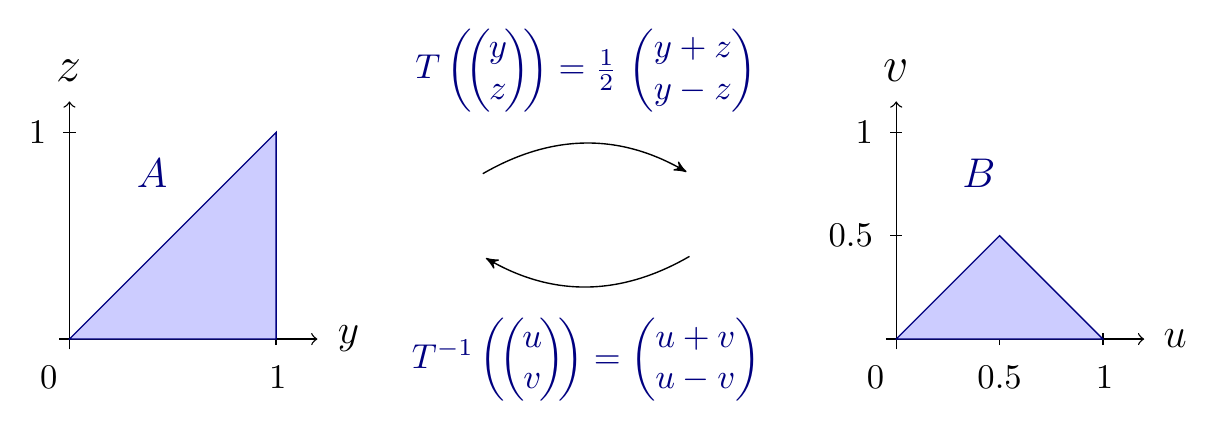}
		\caption{Using transform $T$ to consider the arithmetic mean and half the distance of two
			independent	$\mathrm{unif}([0,1])$ random variables instead.}\label{T}
	\end{figure}
	From the change of variable formula we know that $T(U)$ is uniform on the set
	$B:=\{(u,v),\ v\in[0,\tfrac12], u\in[v,1-v]\}$.
	
	Given the random density $f_U$ as in (\ref{randdens}), we can write
	$$f_{T(U)}(x)=\tfrac{1}{v}\cdot\big(1-\tfrac{1}{v}\cdot\big|x-u\big|\big)^+,\quad x\in[0,1]$$
	and conclude that $f_{(u,v)}(x)$ is non-zero for $(u,v)$ in $B_x=B_1\cup B_2\cup B_3$, where
	\begin{align*}
	B_1:=&\{(u,v),\ v\in[0,\tfrac{x}{2}],u\in[x-v,x+v]\}\\
	B_2:=&\{(u,v),\ v\in[\tfrac{x}{2},\tfrac{1-x}{2}],u\in[v,x+v]\}\\
	B_3:=&\{(u,v),\ v\in[\tfrac{1-x}{2},\tfrac{1}{2}],u\in[v,1-v]\}.
	\end{align*}
	Hence, for $x\in[0,\tfrac12]$, tedious but elementary calculations lead to
	\begin{align*}
	\phi(x):=&\;\E[f_U(x)]=4\cdot\iint\limits_{B_x} \frac1v-\frac{|x-u|}{v^2}\;\mathrm{d}u\,\mathrm{d}v\\
	&\hspace*{-0.37cm}=-8\,(1-x)\cdot\ln(1-x)-8x\,\big(1-\ln(2)\big)
	\end{align*}
	By symmetry around $x=\tfrac12$, the claim follows.
\end{proof}\vspace*{1.5em}

	\begin{nproof}{of Theorem \ref{unrestricted}}
		As usual, let $\eta_t(v)$ denote the opinion of individual $v\in\Z$ at time $t>0$ and
		further let $f_t^{(v)}$ be the density corresponding to this random measure.
		For any fixed $\delta>0$, let us define the random variables
		\begin{equation*}
		F_t^{(v)}(\delta):=\eta_t(v)([0,\delta])=\int_0^\delta f_t^{(v)}(x)\;\mathrm{d}x,\quad \text{for all } t>0,\ v\in \Z.
		\end{equation*}
		Their values lie in the interval $[0,1]$, which actually coincides with the support of their
		distributions. Furthermore, we know that $\{F_0^{(v)}(\delta),\;v\in\Z\}$ are i.i.d.\ random variables
		and Fubini's theorem gives
		\begin{equation}\label{fub}
		\E\big[F_0^{(v)}(\delta)\big]=\int_0^\delta \phi(x)\;\mathrm{d}x.
		\end{equation}
		We can disregard the case $\theta=0$, since there won't be any dynamics and hence a.s.\ disagreement.
		Given $\theta\in(0,1)$, define $\epsilon:=\tfrac12\,(1-\theta)>0$ and choose $\delta>0$ such that
		$\int_0^\delta \phi(x)\;\mathrm{d}x<\epsilon$.
		
		As mentioned above, the support of the distribution of $F_0^{(v)}(\delta)$
		is $[0,1]$ (without gaps), hence we can conclude as in Lemma 4.2 in \cite{ShareDrink} that any vertex is 
		(one-sidedly) $\epsilon$-flat with positive probability, with respect to the sequence 
		$\{F_0^{(v)}(\delta),\;v\in\Z\}$.
		Due to $\Prob(F_0^{(v)}(\delta)=1)=\delta^2>0$ and independence, the coincidence of the following two events
		occurs with positive probability for any $v\in\Z$:
		\begin{enumerate}[(a)]
			\item Vertex $v-1$ is $\epsilon$-flat to the left and vertex $v+1$ $\epsilon$-flat to the right w.r.t.\ 
			$\{F_0^{(v)}(\delta),\;v\in\Z\}$.
			\item $F_0^{(v)}(\delta)=1$
		\end{enumerate}
		Using part (i) of Lemma \ref{flat} and the same line of reasoning as in the proof of Prop.\ 5.1 in
		\cite{ShareDrink}, we can conclude that the edge $\langle v-1,v\rangle$ -- and similarly $\langle v,v+1\rangle$
		-- will be blocked forever, as
		\begin{align*}
		\lVert\eta_t(v)-\eta_t(v-1)\rVert_{\mathrm{TV}}&\geq \big|F_t^{(v)}(\delta)-F_t^{(v-1)}(\delta)\big|\\
		&=F_0^{(v)}(\delta)-F_t^{(v-1)}(\delta)\\
		&\geq 1-(\E\,F_0^{(v-1)}(\delta)+\epsilon)\\
		&>1-2\epsilon=\theta.
		\end{align*}
		From the fact that approaching disagreement is shift-invariant, hence a 0-1-event, we can conclude
		that for $\theta\in[0,1)$ there will a.s.\ be disagreement.
	\end{nproof}

\begin{remark}
 The trivial case $\theta=1$ in Theorem \ref{unrestricted} will surely not lead to blocked edges, so
 disagreement can be ruled out. However, this does not necessarily imply a consensus formation.
 The standard energy argument (as we will also use it in Lemma \ref{energyLa}) fails, since the
 random symmetric triangular distribution does not have a finite second moment, i.e.\ 
 $\E (f_U(x))^2=\infty$ for all $x\in(0,1)$.
 
 Using the results for univariate opinions once more, we can however conclude consensus for
 $\theta=1$ if we change to a different distance measure: the so-called Lévy-distance.
 Consider two probability distributions $\mu$ and $\nu$ on $[0,1]$. Their {\em Lévy-distance}
 $\rho(\mu,\nu)$ is defined as the infimum of the set
 $$\{\epsilon>0 \text{ s.t.\ } \mu([0,x-\epsilon])-\epsilon\leq\nu([0,x])\leq
 \mu([0,x+\epsilon])+\epsilon \text{ for all }x\in[0,1]\}.$$
  		
 To settle the case with $\theta=1$ and $\rho$ as distance measure, let us consider the univariate
 case, where $\{F_t^{(v)}(\delta)=\eta_t(v)([0,\delta]),\;t>0,\ v\in\Z\}$ are the opinions assigned to
 the agents: As there is no bounded confidence restriction, any encounter leads to an update and the
 update rule \eqref{dynamics} applies to both $\{\eta_t(v)\}_{v\in\Z}$ and 
 $\{F_t^{(v)}(\delta)\}_{v\in\Z}$.
 
 Hence, for any fixed $\delta\in[0,1]$, from Theorem \ref{gen} and \eqref{fub} we know that
 $F_t^{(v)}(\delta)$ converges to $\Phi(\delta):=\int_0^\delta \phi(x)\;\mathrm{d}x$ almost surely.
 Consequently, with probability $1$, it holds
 $$\lim_{t\to\infty}F_t^{(v)}(\delta)=\Phi(\delta)\quad\text{for all }v\in\Z,\ \delta \in [0,1]\cap\mathbb{Q}.$$
 Since all $F_t^{(v)}$ and $\Phi$ are continuous and increasing, this implies almost sure
 pointwise (in fact even uniform) convergence. In other words, for any $v\in\Z$ the opinion measure
 $\eta_t(v)$ converges with probability $1$ {\em vaguely} to the intensity measure $\E\eta$, having
 density $\phi$.
 As vague convergence of measures on a compact interval is metrized by the corresponding Lévy-metric
 (cf.\ for example Lemma 2 in \cite{grand}), this implies $\lim_{t\to\infty}\rho(\eta_t(v),\E\eta)=0$
 almost surely for all $v\in\Z$.\vspace*{1em}
  
 Note that a.s.\ consensus for $\theta=1$ and the Lévy-metric does not immediately imply a result for the
 total variation case, as $ \rho(\mu,\nu)\leq\lVert\mu-\nu\rVert_{\mathrm{TV}}$ for two probability
 measures $\mu$ and $\nu$.
\end{remark}

\section{Agents with bounded determination}\label{sec:rrtd}

In order to get a non-trivial phase transition in the parameter $\theta$, let us now consider a situation in
which all the agents feature at least a certain minimum of open-mindedness. This will be incorporated in our model
by disallowing the initial random measure to be concentrated on a subinterval of length less than $\gamma$, for a
fixed constant $\gamma\in(0,1)$. We will refer to these as random {\em restricted triangular distributions}.

Before we can show the main result, Theorem \ref{rrtd}, which states that there is a phase transition and the
precise threshold value for the parameter $\theta$, we need to study the altered intensity measure and verify
a few auxiliary results, needed to guarantee the existence of $\epsilon$-flat vertices (cf.\ Lemma \ref{flatLa}).

\begin{lemma}\label{phi_gamma}
	For fixed $\gamma\in(0,1)$, consider the absolutely continuous random measure $\eta_\gamma$ to be
	given by the density
	\begin{equation}\label{randdens_gamma}
	f_U(x)=\tfrac{2}{|y-z|}\cdot\big(1-\tfrac{2}{|y-z|}\cdot\big|x-\tfrac{y+z}{2}\big|\big)^+,\quad x\in[0,1],
	\end{equation}
	where $U=(y,z)$ is taken uniformly from the set $\{y,z\in[0,1],\;|y-z|\geq\gamma\}$ and note that this
	corresponds to the expression in \eqref{densdef}, conditional on the support being an interval of length
	at least $\gamma$.
	Then the density of its intensity measure $\E\eta_\gamma$ is given by the following expressions
	(assuming $0\leq x\leq\tfrac12$):
	\begin{enumerate}[1)]
	\item for $x\geq\gamma$
	\begin{equation*}\hspace*{-25mm}
	\phi_\gamma(x)=-\frac{8}{(1-\gamma)^2}\,\Big[(1-x)\,\ln(1-x)+x\,(1-\ln(2))+\frac{\gamma}{4}\Big],\\
	\end{equation*}
	\item for $x\geq 1-\gamma$
	\begin{equation*}\hspace*{-6.8mm}
	\phi_\gamma(x)=\begin{cases}
		-\frac{8}{(1-\gamma)^2}\,\Big[(1-x)\,\ln\gamma+x+\frac{1-2x}{2\gamma}-\frac{\gamma}{2}\Big],& x\leq\tfrac{\gamma}{2}\\[1.5mm]
		-\frac{8}{(1-\gamma)^2}\,\Big[-x\,\ln(2x)+\ln\gamma+x+\frac{(1-x)^2+x^2}{2\gamma}-\frac{3}{4}\gamma\Big],
		& x\geq \tfrac{\gamma}{2}\end{cases},
	\end{equation*}
	\item for $x\leq\gamma,\ x\leq 1-\gamma$
	\begin{equation*}\hspace*{-4mm}
	\phi_\gamma(x)=\begin{cases}	
		-\frac{8}{(1-\gamma)^2}\,\Big[(1-x)\,\ln(1-x)+x-\frac{x^2}{2\gamma}\Big],& x\leq \frac{\gamma}{2}\\[1.5mm]
		-\frac{8}{(1-\gamma)^2}\,\Big[(1-x)\,\ln(1-x)-x\,\ln\big(\frac{2x}{\gamma}\big)+x+\frac{x^2}{2\gamma}
		-\frac{\gamma}{4}\Big],& x\geq\frac{\gamma}{2}\\
	\end{cases}.
	\end{equation*}
	\end{enumerate}
	 The corresponding expressions for $x\in[\frac12,1]$ are obtained by replacing $x$ by $1-x$.	
\end{lemma}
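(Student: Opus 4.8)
The plan is to run the argument of the proof of Lemma \ref{density-la} verbatim, changing only the domain of the parameter $U$. Restricting by symmetry to $0\le z\le y\le 1$ and applying the (affine, constant-Jacobian) map $T((y,z))=\tfrac12(y+z,\,y-z)$, the extra constraint $|y-z|\ge\gamma$ becomes $v\ge\tfrac\gamma2$ for the second coordinate, so $T(U)$ is uniform on
$$B_\gamma:=\big\{(u,v):\ v\in[\tfrac\gamma2,\tfrac12],\ u\in[v,1-v]\big\}.$$
A one-line computation gives $|B_\gamma|=\int_{\gamma/2}^{1/2}(1-2v)\,\mathrm dv=\tfrac14(1-\gamma)^2$, so $T(U)$ has density $\tfrac{4}{(1-\gamma)^2}$ on $B_\gamma$; this is the common prefactor $\tfrac{-8}{(1-\gamma)^2}=\tfrac{4}{(1-\gamma)^2}\cdot(-2)$ appearing in the statement (the factor $-2$ being produced by the $u,v$-integrations). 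Writing, as before, $f_{(u,v)}(x)=\tfrac1v\big(1-\tfrac1v|x-u|\big)^+$ and letting $B_x=B_1\cup B_2\cup B_3$ be the region where $f_{(u,v)}(x)\neq0$ identified in the proof of Lemma \ref{density-la}, one has
$$\phi_\gamma(x)=\frac{4}{(1-\gamma)^2}\iint_{B_x\cap\{v\ge\gamma/2\}}\Big(\frac1v-\frac{|x-u|}{v^2}\Big)\,\mathrm du\,\mathrm dv.$$

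Next I would organize everything around the position of the cutoff $v=\gamma/2$ relative to the two breakpoints $v=x/2$ and $v=(1-x)/2$ of $B_x$ (for $x\le\tfrac12$ the latter is the larger). This yields exactly the three regimes in the statement: when $x\ge\gamma$ the cutoff only trims the bottom of $B_1$; when $x\le\gamma$ and $x\le1-\gamma$ it deletes $B_1$ and trims $B_2$; when $x\ge1-\gamma$ it deletes $B_1$ and $B_2$ and trims $B_3$. In the first two regimes it is cleanest to write $\phi_\gamma(x)=\tfrac{4}{(1-\gamma)^2}\big(I(x)-R(x)\big)$, where $4\,I(x)=-8\big[(1-x)\ln(1-x)+x(1-\ln2)\big]$ is the unrestricted integral from Lemma \ref{density-la} and $R(x)$ is the integral over the removed low-$v$ slab; in the third regime one integrates directly over the surviving part of $B_3$.

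Finally, to evaluate $R(x)$ (resp.\ the direct integral): after the inner $u$-integration one is left with an elementary $v$-integral whose integrand changes functional form precisely when the relevant $v$-range crosses $v=x$, the value at which $x-u$ changes sign over the pertinent $u$-interval. This is the origin of the secondary split $x\le\tfrac\gamma2$ versus $x\ge\tfrac\gamma2$ in regimes 2) and 3): it merely records whether $v=x$ lies inside the surviving (resp.\ removed) $v$-interval. Carrying out these purely elementary rational-plus-logarithmic integrations and collecting terms gives the displayed expressions. The formulas for $x\in[\tfrac12,1]$ then follow from the fact that reflecting $[0,1]$ through $\tfrac12$ sends the triangular density with midpoint $c$ to the one with midpoint $1-c$ and leaves the law of $U$ on $\{|y-z|\ge\gamma\}$ invariant, so $\phi_\gamma$ is symmetric about $\tfrac12$.

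The only real obstacle is bookkeeping: correctly describing each surviving sub-region, keeping the logarithmic antiderivatives straight across the sub-cases, and (as a consistency check) verifying that the pieces agree at the seams $x=\gamma/2$, $x=\gamma$ and $x=1-\gamma$, as they must since $\phi_\gamma$ is continuous. No conceptual difficulty beyond that is expected.
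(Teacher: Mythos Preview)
Your proposal is correct and follows essentially the same route as the paper: restrict to $B_\gamma$ via the transform $T$, identify the uniform density $4/(1-\gamma)^2$, and carry out the elementary integrations case by case according to where the cutoff $v=\gamma/2$ sits relative to the breakpoints of $B_x$. The paper's own proof is in fact sketchier than yours---it simply refers to Figure~\ref{areas} for the regime picture and leaves the ``cumbersome but nevertheless elementary'' computations to the reader---so your explicit bookkeeping and the device of writing $\phi_\gamma=\tfrac{4}{(1-\gamma)^2}\,(I-R)$ in the first two regimes already go beyond what the paper records.
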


\begin{proof}
	As in the proof of Lemma \ref{density-la}, we can take $U$ to be uniform on the set
	$A_\gamma:=\{(y,z)\in\R^2,\ \gamma\leq y\leq 1,\ 0\leq z\leq y-\gamma\}$ and consider the very same linear
	transform
	$T$, see Figure \ref{T_gamma} below.
	\begin{figure}[H]
		\hspace*{1cm}
		\includegraphics[scale=0.9]{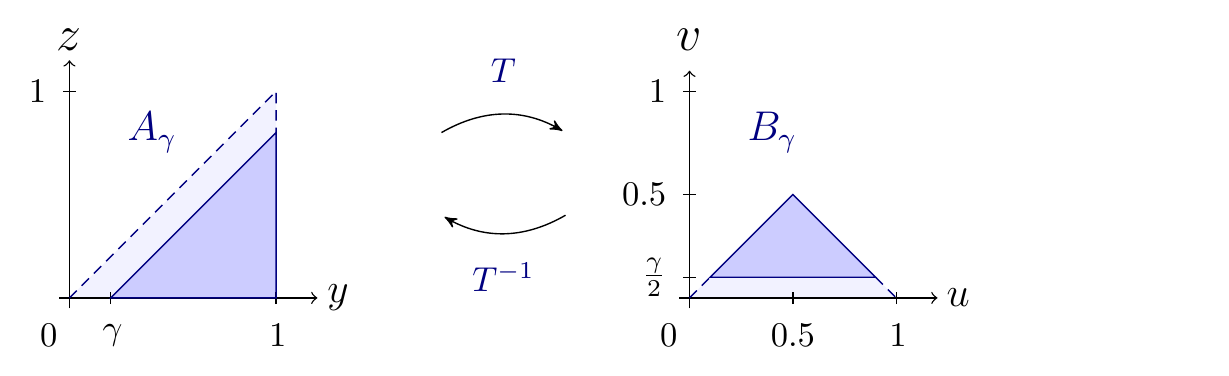}
		\caption{The restricted set $A_\gamma$ forces a minimum amount of open-mindedness.}\label{T_gamma}
	\end{figure}
	
	Then, $T(U)$ is uniform on the set $B_\gamma:=\{(u,v),\ v\in[\tfrac{\gamma}{2},\tfrac12], u\in[v,1-v]\}$
	and the corresponding random density is still
	$$f_{T(U)}(x)=\tfrac{1}{v}\cdot\big(1-\tfrac{1}{v}\cdot\big|x-u\big|\big)^+,\quad x\in[0,1].$$
    Depending on the values of $x\in[0,\tfrac12]$ and $\gamma\in(0,1)$ -- see Figure \ref{areas} for an
    illustration -- quite cumbersome but nevertheless elementary calculations in the same vein as in the
    proof of Lemma \ref{density-la} (which we will leave to the reader to perform) lead to the formulas
    stated above.
    \vspace*{-1.2em}
   	\begin{figure}[H]
		\hspace*{2.9cm}
   		\includegraphics[scale=0.9]{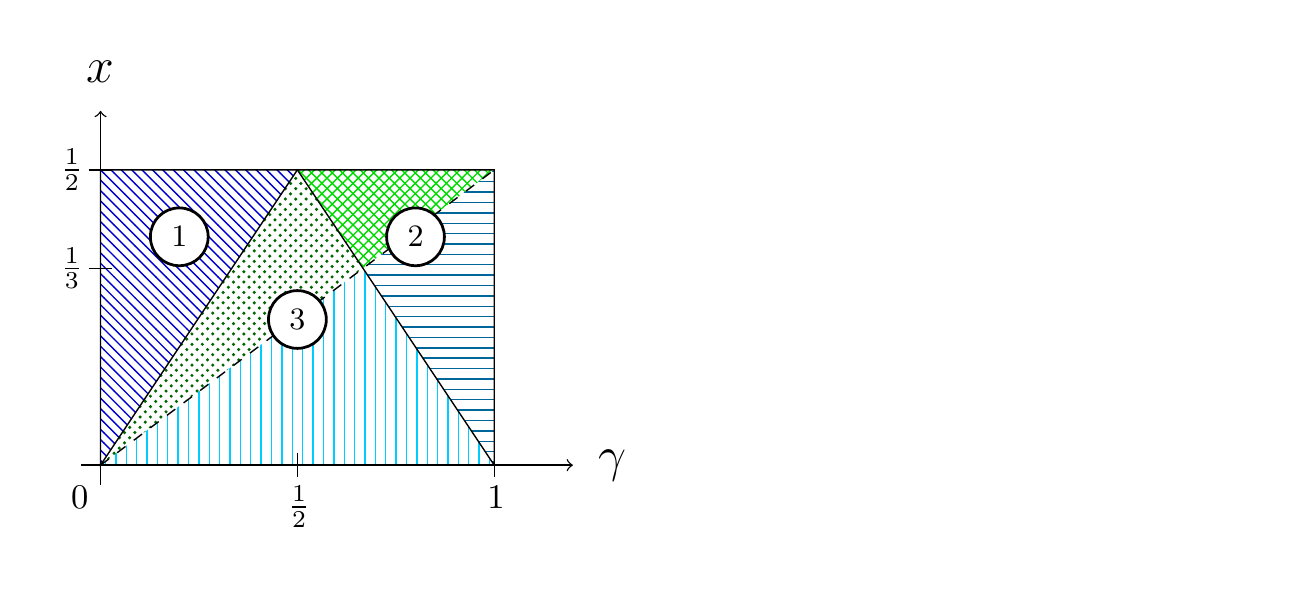}\vspace*{-1.2em}
   		\caption{Different regimes for the form of $\phi_\gamma(x)$.}\label{areas}
   	\end{figure}
    \noindent The last claim follows again by the symmetry in $x$.
\end{proof}

\begin{lemma}\label{monoton}
	Consider $\E\eta_\gamma$ as in the previous lemma.
	Irrespectively of the value of $\gamma\in(0,1)$, the density function $\phi_\gamma(x),\ x\in[0,1],$
	corresponding to the intensity measure, is (strictly) increasing on $[0,\tfrac12]$ and (strictly) decreasing
	on $[\tfrac12,1]$.
\end{lemma}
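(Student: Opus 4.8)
The plan is first to reduce the statement to the interval $[0,\tfrac12]$ by the symmetry $\phi_\gamma(x)=\phi_\gamma(1-x)$ recorded in Lemma~\ref{phi_gamma}, and then \emph{not} to differentiate the explicit branches of $\phi_\gamma$ but to argue directly from an integral representation. By Fubini's theorem the intensity density satisfies $\phi_\gamma(x)=\E[f_U(x)]$ (cf.\ the proof of Lemma~\ref{density-la}), and applying the linear transform $T$ from the proof of Lemma~\ref{phi_gamma} this becomes
\[
 \phi_\gamma(x)=\frac{1}{|B_\gamma|}\iint_{B_\gamma}\tfrac1v\big(1-\tfrac{|x-u|}{v}\big)^+\,\mathrm{d}u\,\mathrm{d}v,\qquad
 B_\gamma=\big\{(u,v):\tfrac{\gamma}{2}\le v\le\tfrac12,\ v\le u\le 1-v\big\}.
\]
For fixed $v$ the integrand is a symmetric triangular spike of unit mass centred at $u$; it is Lipschitz in $x$ with constant $v^{-2}\le 4\gamma^{-2}$ and vanishes once $|x-u|\ge v$. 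Hence one may differentiate under the integral sign (dominated convergence; for each fixed $x$ the non-smooth locus $\{(u,v):u\in\{x-v,x,x+v\}\}$ is a two-dimensional null set), which gives, for every $x\in(0,\tfrac12)$,
\[
 \phi_\gamma'(x)=\frac{1}{|B_\gamma|}\iint_{R(x)}\frac{\mathrm{sgn}(u-x)}{v^2}\,\mathrm{d}u\,\mathrm{d}v,\qquad
 R(x):=\big\{(u,v)\in B_\gamma:|u-x|<v\big\}.
\]

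Splitting $R(x)$ into $R_-(x)=R(x)\cap\{u<x\}$ and $R_+(x)=R(x)\cap\{u>x\}$ (the slice $u=x$ being negligible), the sign of $\phi_\gamma'(x)$ equals that of $\iint_{R_+(x)}v^{-2}-\iint_{R_-(x)}v^{-2}$. The crux is that the reflection $\sigma(u,v)=(2x-u,v)$ maps $R_-(x)$ injectively into $R_+(x)$ while preserving the measure $v^{-2}\,\mathrm{d}u\,\mathrm{d}v$. Indeed, if $(u,v)\in R_-(x)$ then $v\le u<x$, so $v<x$; consequently $2x-u>x>v$ secures the lower wall $2x-u\ge v$, while $2x-u\le 2x-v\le 1-v$ (here $2x\le 1$ enters) secures the upper wall $2x-u\le 1-v$, so $\sigma(u,v)\in B_\gamma$; moreover $|(2x-u)-x|=|u-x|<v$ together with $2x-u>x$ places it in $R_+(x)$. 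Since $\sigma$ is an involution of unit Jacobian leaving $v$ fixed, it preserves $v^{-2}\,\mathrm{d}u\,\mathrm{d}v$, whence $\iint_{R_-(x)}v^{-2}\le\iint_{R_+(x)}v^{-2}$ and $\phi_\gamma'(x)\ge 0$.

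To upgrade this to a strict inequality I would exhibit a piece of $R_+(x)$ of positive area disjoint from $\sigma(R_-(x))$: the set $\big\{(u,v):\max\{x,\tfrac{\gamma}{2}\}<v<\tfrac12,\ v<u<\min\{x+v,1-v\}\big\}$ is non-empty for every $x\in(0,\tfrac12)$ and $\gamma\in(0,1)$, it plainly lies in $R_+(x)$, and each of its points has $v>x$ and $u>v$, so $2x-u<2x-v<v$, meaning its $\sigma$-preimage $(2x-u,v)$ violates the lower wall $u\ge v$ and therefore cannot lie in $R_-(x)\subseteq B_\gamma$. Thus $\sigma(R_-(x))\subsetneq R_+(x)$ by a set of positive measure, so $\phi_\gamma'(x)>0$ for all $x\in(0,\tfrac12)$; as $\phi_\gamma$ is continuous this yields strict monotonicity on $[0,\tfrac12]$, and strict monotonicity on $[\tfrac12,1]$ follows from $\phi_\gamma(x)=\phi_\gamma(1-x)$. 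The step I expect to be the main obstacle is precisely the bookkeeping of this reflection — verifying that $\sigma$ keeps $R_-(x)$ inside $B_\gamma$ (which is where the hypothesis $x\le\tfrac12$ and the edge effects at the walls $u=v$ and $u=1-v$ intervene) and isolating exactly which part of $R_+(x)$ stays uncovered, so that the conclusion is uniform in $\gamma$. A purely computational alternative — differentiating each of the branches of $\phi_\gamma$ in Lemma~\ref{phi_gamma} and checking positivity, plus matching at the regime boundaries — is available but far less transparent.
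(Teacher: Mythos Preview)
Your argument is correct, but the paper takes a somewhat different (and shorter) route. Instead of differentiating under the integral sign, the paper fixes $x_1<x_2$ in $[0,\tfrac12]$ and compares the \emph{random variables} $f_T(x_1)$ and $f_T(x_2)$ directly: conditioning on $V=v$, it observes that $f_{(U,v)}(x_1)<f_{(U,v)}(x_2)$ whenever $U>\tfrac{x_1+x_2}{2}$, while for $U$ restricted to the symmetric interval $[v,\,x_1+x_2-v]$ (possible only when $v\le\tfrac{x_1+x_2}{2}$) the two random variables have the same distribution by reflection about $\tfrac{x_1+x_2}{2}$. This yields the stochastic dominance $f_T(x_1)\prec f_T(x_2)$ and in particular $\phi_\gamma(x_1)<\phi_\gamma(x_2)$.

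Conceptually the two proofs exploit the same symmetry --- reflection of the variable $u$ --- but package it differently. The paper's pointwise comparison avoids the analytic overhead (Lipschitz bound, dominated convergence, null-set bookkeeping) and gives strict monotonicity in one stroke; your derivative-and-reflection argument is more explicit about the geometry of $B_\gamma$ and makes the ``uncovered'' region that forces strictness completely concrete. Your worry about the reflection step is well placed but you handled it correctly: the constraint $x\le\tfrac12$ is exactly what makes $2x-u\le 2x-v\le 1-v$, and the witness set $\{\max\{x,\tfrac\gamma2\}<v<\tfrac12,\ v<u<\min\{x+v,1-v\}\}$ does lie in $R_+(x)\setminus\sigma(R_-(x))$ with positive area for every $x\in(0,\tfrac12)$ and $\gamma\in(0,1)$.
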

\begin{proof}
	In principle, one could simply check the expressions for $\phi_\gamma$ given in Lemma \ref{phi_gamma}.
	However, this simple fact can also be seen directly from the construction:
	Let us consider the random density in \eqref{randdens_gamma} to be generated by a vector $T=(U,V)$
	that is chosen uniformly from $B_\gamma$, as described in the proof of Lemma \ref{phi_gamma}. After
	having picked $V\sim\mathrm{unif}([\tfrac{\gamma}{2},\tfrac12])$, we take $U$ to be uniform on $[V, 1-V]$.
	
	Consider $x_1,x_2\in[0,\tfrac12]$, such that $x_1<x_2$, and $V=v$ to be already fixed.
	First note that $f_T(x_1)< f_T(x_2)$ for $U>\frac{x_1+x_2}{2}\in(0,\tfrac12)$.
	If $v\leq\frac{x_1+x_2}{2}$, symmetry around $\frac{x_1+x_2}{2}$ shows that
	$f_{(U,v)}(x_1)$ and $f_{(U,v)}(x_2)$ have the same distribution for $U$ conditioned on
	$[v, x_1+x_2-v]$. In conclusion, we found $f_T(x_1)\prec f_T(x_2)$ and especially
	$$\phi_\gamma(x_1)=\E f_T(x_1)<\E f_T(x_2)=\phi_\gamma(x_2).$$
	Symmetry of $\phi_\gamma$ around $x=\tfrac12$ implies the second part of the claim.
\end{proof}
\vspace*{1em}

Note that $\phi_\gamma$ can not be arbitrarily well approximated by the density of a restricted triangular
distribution. Consequently, for $\epsilon>0$ sufficiently small, there can't be any $\epsilon$-flat vertices
with respect to the initial configuration as all triangular distributions have a positive distance to the
intensity measure $\E\eta_\gamma$ bounded away from 0. For this reason, we have to go the same detour as in
the proof of part (ii) of Thm.\ 2.2 in \cite{Deffuant}.

We need to verify that the density of the intensity measure actually can appear at a later time, more
precisely be arbitrarily well approximated by the opinions that form when agents have interacted. This
happens in fact for all positive values of the model parameter $\theta$:

\begin{lemma}\label{innerpart}
	Consider the Deffuant model on $\Z$ with arbitrary parameter $\theta\in(0,1]$ in which the initial
	opinions are i.i.d.\ absolutely continuous measures given by the random densities described in 
	\eqref{randdens_gamma}. Then, at any time $t>0$ and for any $\epsilon>0$, a (sufficiently long) fixed
	finite section of the infinite path will hold opinions that are less than $\epsilon$ away from
	$\E\eta_\gamma$ in total variation distance (and be bounded by edges on which no Poisson events
	occurred up to time $t$) with positive probability.
\end{lemma}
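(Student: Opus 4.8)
The plan is to build, on a long but fixed finite stretch of vertices, a deterministic sequence of encounters (a "recipe") that produces opinions uniformly close to the intensity density $\phi_\gamma$, and then argue that this recipe has positive probability of being realized (in the right order, and isolated from the rest of $\Z$). The starting observation is the SLLN for continuous densities, Theorem \ref{SLLN}: applied to the i.i.d.\ densities $f_U$ of \eqref{randdens_gamma} — which are uniformly bounded on the event that the support has length $\geq\gamma'$ for any $\gamma'>\gamma$, and equicontinuous there — it gives that for $N$ large, the arithmetic mean $\frac1N\sum_{i=1}^N f_{U_i}(x)$ is, with probability bounded away from $0$, uniformly within $\epsilon/3$ of $\phi_\gamma(x)$. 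So fix such an $N$ and work on the block $\{-L,\dots,L\}$ with $2L+1\gg N$.

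Next I would realize an arithmetic-type average of $N$ of the block's densities using only nearest-neighbour Deffuant updates with step $\mu$. Since $\theta\le 1$ and all opinion measures in play are triangular with support length $\ge\gamma$, consecutive opinions start within total variation distance bounded by something $<1$, hence below $\theta$ whenever $\theta$ is not too small — but for general small $\theta>0$ one cannot average in one shot. The standard fix (exactly the detour in the proof of part (ii) of Theorem \ref{gen} / Thm.\ 2.2 in \cite{Deffuant}) is to average \emph{in small increments}: repeatedly let a vertex interact with an immediate neighbour whose current opinion is close by, so every single update stays well below the confidence bound, and after finitely many such updates the opinion at the central vertex is an explicit convex combination of the original block densities whose weights approximate uniform weights $1/N$ to within $\epsilon/(3\lVert\cdot\rVert)$. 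Because the densities are uniformly bounded, closeness of the weight vectors in $\ell^1$ transfers to closeness of the resulting densities in total variation; combined with the SLLN estimate this puts the central opinion within $\epsilon$ of $\phi_\gamma$. One also insists that the very first and very last edges of the block, $\langle -L-1,-L\rangle$ and $\langle L,L+1\rangle$, are never used, so the block is dynamically sealed off.

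Finally, the probabilistic conclusion. The recipe is a finite ordered list of edges inside $\{-L,\dots,L\}$; the event that precisely these Poisson events occur, in this order, within the time window $[0,t]$, and that no Poisson event occurs on the two boundary edges up to time $t$, has strictly positive probability — this is exactly the kind of "prescribe a finite pattern of Poisson events" argument used repeatedly in \cite{ShareDrink} and \cite{Deffuant}. On that event the central vertex (indeed, by symmetry, a whole fixed sub-block around it, since we may run the same construction in parallel on disjoint sub-blocks) holds an opinion within $\epsilon$ of $\E\eta_\gamma$ at time $t$, and it is bounded by the untouched boundary edges. I expect the main obstacle to be the bookkeeping in the incremental-averaging step: one must simultaneously (a) keep every individual total variation increment strictly below the possibly tiny $\theta$, (b) control the cumulative weight distortion so the final convex combination is genuinely close to the uniform average, and (c) ensure the whole schedule is finite and stays inside the block — this is "cumbersome but elementary" in the same spirit as the calculations the paper has been deferring, and the cleanest route is to cite the analogous construction in \cite{Deffuant} and indicate the (routine) modifications forced by working with densities and total variation rather than reals and absolute value.
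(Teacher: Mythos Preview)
Your proposal has a genuine gap at the averaging step. You write that ``consecutive opinions start within total variation distance bounded by something $<1$'', but this is false: two restricted triangular distributions with disjoint supports (e.g.\ on $[0,\gamma]$ and $[1-\gamma,1]$ when $\gamma<\tfrac12$) are at total variation distance exactly $1$. More importantly, for small $\theta>0$ there is no reason whatsoever that a given vertex has \emph{any} immediate neighbour within distance $\theta$, so your ``average in small increments'' scheme cannot even get started --- the Deffuant dynamics simply does nothing on an edge whose endpoints differ by more than $\theta$. The construction in \cite{Deffuant} you invoke works because real-valued opinions live in a one-dimensional interval and one can arrange a monotone chain of nearby values; here the opinion space is infinite-dimensional and no such automatic ordering is available.

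The paper supplies the missing idea: a permutation argument combined with a Lipschitz estimate in the \emph{parameter} space. One first checks (inequality \eqref{estimate}) that if the generating vectors $(y_1,z_1),(y_2,z_2)\in A_\gamma$ differ by at most $\delta$ in each coordinate, then the corresponding triangular measures are within $2\delta/\gamma$ in total variation. One then subdivides $[0,1]^2$ into $m^2$ squares with $m\geq 16/(\gamma\theta)$ and takes $N$ large enough that, with positive probability, every square inside $A_\gamma$ contains at least one of $U_1,\dots,U_N$ \emph{and} the empirical mean density is within $\epsilon$ of $\phi_\gamma$. Since the $U_i$ are i.i.d., every permutation of the labels is equally likely, so with probability at least $1/N!$ the agents $1,\dots,N$ receive these parameter values in a serpentine order through the grid of squares, forcing neighbouring $U_i$ to be close in $A_\gamma$ and hence neighbouring initial opinions to be within $\theta/2$ in total variation. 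From that configuration, repeated left-to-right sweeps keep all neighbour distances below $\theta$ (easy induction) and drive every opinion towards the preserved common mean by the energy argument of Lemma \ref{energyLa}. Your SLLN step, boundary-isolation step, and Poisson-recipe step are all correct; what was missing is precisely this serpentine arrangement that makes the first update possible.
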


\begin{proof}	
Fix $\theta\in(0,1]$, $\epsilon>0$ and $t>0$. The idea is to show that a set of agents with suitably assigned
initial opinions can interact in such a way that at time $t$ opinions close to $\E\eta_\gamma$ are formed.

Let us consider an i.i.d.\ sequence $(U_n)_{n\in\N}$ of random variables uniformly distributed on
$A_\gamma=\{(y,z)\in\R^2,\ \gamma\leq y\leq 1,\ 0\leq z\leq y-\gamma\}$, see Figure \ref{T_gamma}.
Then we get the density corresponding to the initial opinion for agent $v\in \N$ by
\begin{equation}\label{initdens}
f_0^{(v)}=f_{U_v}
\end{equation}
where $f_{U_v}$ is taken to be as in \eqref{randdens_gamma} and $f_t^{(v)}$ denotes the random density
corresponding to the opinion of agent $v$ at time $t$.

From Theorem \ref{SLLN} we know that with probability 1
\begin{equation}\label{SLLNdens}
\lim_{n\to\infty} \frac1n \sum_{v=1}^n f_0^{(v)}(x)= \phi_\gamma(x)
\end{equation}
uniformly in $x\in[0,1]$.

It is not hard to check that $\max\{|y_1-y_2|,|z_1-z_2|\}\leq\delta$ entails
\begin{equation}\label{estimate}
	\frac12 \int_0^1\big|f_{(y_1,z_1)}(x)-f_{(y_2,z_2)}(x)\big|\;\mathrm{d}x\leq \frac{2\delta}{\gamma},
\end{equation}
in other words: If the coordinates of two vectors, $(y_1,z_1)$ and $(y_2,z_2)$, shaping restricted
triangular distributions in the sense of \eqref{randdens_gamma} do not differ by more than $\delta$,
the total variation distance between the corresponding measures is at most $\tfrac{2\delta}{\gamma}$.

Fix $m\geq\tfrac{16}{\gamma \theta}$ and subdivide $[0,1]^2$
into $m^2$ squares. The standard SLLN implies that the fraction of $(U_n)_{n\in\N}$ landing
in a square completely contained in $A_\gamma$ a.s.\ tends to $\tfrac{2}{m^2(1-\gamma)^2}>0$ as $n\to\infty$. 
We can therefore choose $N\in\N$ large enough such that, with positive probability, every square that
is a subset of $A_\gamma$ contains at least one of $(U_n)_{n=1}^N$ and
\begin{equation}
\Big\lVert\frac1N \sum_{v=1}^N f_0^{(v)}-\phi_\gamma \Big\rVert_\infty\leq \epsilon
\end{equation}

Note that by symmetry under permutations, there is at least a chance of $\frac{1}{N!}$ that the agents
$\{1,\dots,N\}$
are assigned these values from $A_\gamma$ in such a way that those of neighboring agents do not
differ much in both coordinates; more precisely, matching the values in a serpentine fashion as depicted
in Figure \ref{spread} will keep discrepancies in the $y$-coordinate below $\frac4m$ and in the
z-coordinate below $\frac3m$.

\begin{figure}[h]
	\hspace*{2.5cm}
	\includegraphics[scale=0.8]{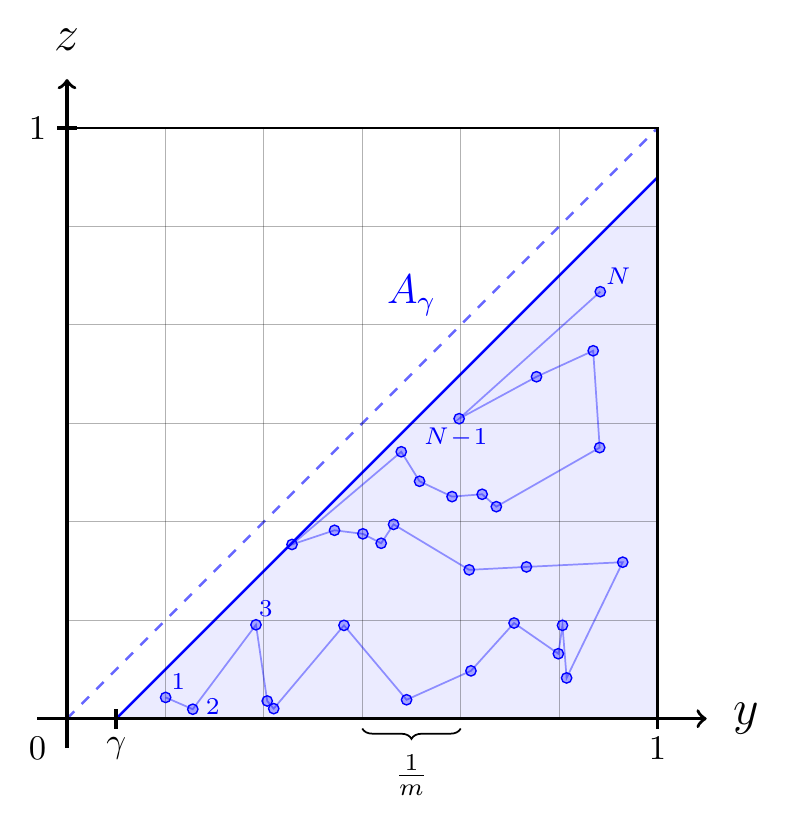}
	\caption{Finding suitable values for the initial opinions that can generate opinions close to
		the intensity measure $\E\eta_\gamma$ on a finite section.}\label{spread}
\end{figure}

Putting things together, we found that for large enough $N$ with non-zero probability the agents
$1$ through $N$ have an initial configuration with a mean at total variation distance at most
$\frac{\epsilon}{2}$ to $\E\eta_\gamma$ and distance at most $\frac{8}{\gamma m}\leq \frac{\theta}{2}$
between neighbors.

Assume that there are no updates on the edges $\langle 0,1\rangle$ and $\langle N,N+1\rangle$ up to
time $t$. It is easy to check (by induction) that in this case, updates on the considered section
in sweeps from left to right, i.e.\ first on $\langle 1,2 \rangle$, then $\langle 2,3 \rangle$ etc.\ 
until $\langle N-1,N\rangle$ repetitively, will keep the total variation distance of neighbors inside
the section always below $\theta$.

The following lemma finally verifies that a sufficiently large number of such sweeps will eventually
bring the considered opinions within total variation distance $\frac{\epsilon}{2}$ of their mean, due
to the fact that the mean is preserved given that there are no updates on neither $\langle 0,1\rangle$
nor $\langle N,N+1\rangle$.

Since the Poisson clocks and the initial configuration are independent, these two events coincide
with positive probability and the claim is verified.
\end{proof}

\vspace*{1em}
\begin{lemma}\label{energyLa}
	If there are infinitely many (performed) updates along an edge, the total variation distance of
	the corresponding neighbors' opinions a.s.\ converges to 0.
\end{lemma}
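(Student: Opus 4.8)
The plan is to run the classical ``energy decrement'' argument, but adapted to measure-valued opinions via the total variation functional. Fix an edge $e=\langle u,u+1\rangle$ and suppose the Poisson process on $e$ has infinitely many events at which an update is actually performed (i.e.\ the TV-distance at that moment is $\le\theta$). The key observation is that a single performed update along $e$ is a convex-combination step: if just before the update the densities at $u$ and $u+1$ are $f$ and $g$, afterwards they are $(1-\mu)f+\mu g$ and $\mu f+(1-\mu)g$. Writing $h:=f-g$, the new difference of densities is $(1-2\mu)h$, so the new TV-distance equals $(1-2\mu)$ times the old one. For $\mu<\tfrac12$ this is a strict contraction by the fixed factor $(1-2\mu)$; for $\mu=\tfrac12$ a single performed update sets the TV-distance on $e$ to $0$ and we are done immediately. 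So from now on assume $\mu\in(0,\tfrac12)$.

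The obstacle is that other updates, on the edges $\langle u-1,u\rangle$ and $\langle u+1,u+2\rangle$, can \emph{increase} the TV-distance across $e$ between two consecutive performed updates on $e$, so the sequence of TV-values along $e$ need not be monotone. The standard remedy is a global monovariant. I would introduce, for each bond $\langle v,v+1\rangle$, the quantity $W_t(v):=\tfrac12\int_0^1\bigl|f_t^{(v)}(x)-f_t^{(v+1)}(x)\bigr|\,\mathrm dx=\lVert\eta_t(v)-\eta_t(v+1)\rVert_{\mathrm{TV}}$, and compare the ``energy-type'' functional to what it does under a performed update on one edge. At a performed update on $\langle v,v+1\rangle$: the density on $\langle v,v+1\rangle$ itself gets multiplied by $(1-2\mu)$, while for a neighbouring bond, say $\langle v-1,v\rangle$, the difference of densities changes from $f_t^{(v-1)}-f_t^{(v)}$ to $f_t^{(v-1)}-[(1-\mu)f_t^{(v)}+\mu f_t^{(v+1)}]=(f_t^{(v-1)}-f_t^{(v)})-\mu(f_t^{(v+1)}-f_t^{(v)})$; by the triangle inequality for the $L^1$-norm this increases $W$ on that bond by at most $\mu\lVert\eta_t(v)-\eta_t(v+1)\rVert_{\mathrm{TV}}=\mu W$, and symmetrically for $\langle v+1,v+2\rangle$. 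Hence a performed update on a bond with current TV-value $w$ changes $\sum_{\text{bonds}}W$ by at most $-2\mu w+2\mu w=0$: the total-variation ``$\ell^1$ energy'' $\sum_v W_t(v)$ is nonincreasing along any performed update (and unchanged by a non-performed one). This is exactly the measure-valued analogue of the decrease-of-energy bookkeeping in \cite{ShareDrink} (Lemma 6.3) and \cite{multidim}.

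With the monovariant in hand the argument finishes as follows. Restrict to a bounded block $[-n,n]$ of bonds around $e$ and track $\Sigma_t^{(n)}:=\sum_{|v|\le n}W_t(v)$; this is still nonincreasing under updates internal to the block, and updates on the two boundary bonds $\langle -n-1,-n\rangle$, $\langle n,n+1\rangle$ can add at most $\mu\cdot 1$ each (since every $W\le 1$). But Poisson events on the two boundary bonds arrive at finite rate, so on the event that $e$ sees infinitely many performed updates, between consecutive boundary events infinitely many performed updates on $e$ occur as well. Actually the cleaner route is the local one: look only at the three bonds $B=\{\langle u-1,u\rangle,\langle u,u+1\rangle,\langle u+1,u+2\rangle\}$ and the functional $\Sigma_t:=\sum_{b\in B}W_t(b)$. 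A performed update on $\langle u,u+1\rangle$ decreases $W$ on that bond by $2\mu w$ and raises $W$ on each of the other two bonds of $B$ by at most $\mu w$, for a net change $\le 0$, while it strictly decreases $\Sigma_t$ unless the two neighbouring bonds simultaneously absorb the full $\mu w$ each — and crucially, even in that worst case the bond $\langle u,u+1\rangle$ itself ends at $(1-2\mu)w$. So if $W_t(\langle u,u+1\rangle)$ did not tend to $0$, there is $\delta>0$ and infinitely many performed updates on $e$ at moments when $w\ge\delta$; right after each such update the bond value is $\le(1-2\mu)w\le w-2\mu\delta$, i.e.\ each such update produces a genuine ``drop of size $\ge 2\mu\delta$'' on the edge that must later be ``recharged'' by boundary contributions to $\Sigma$ from $\langle u-1,u\rangle$ and $\langle u+1,u+2\rangle$; summing these drops contradicts $\Sigma_t\ge 0$ together with the fact that the only inflow into $\Sigma$ comes from the two \emph{outer} bonds $\langle u-2,u-1\rangle$, $\langle u+2,u+3\rangle$ via the same $\le\mu w\le\mu$ bound at their (finite-rate) Poisson events. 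Iterating this comparison outward over a growing block, or equivalently invoking the global $\ell^1$-energy $\sum_{v\in\Z}W_t(v)$ (which is a.s.\ finite at some positive time by Fubini, since $\E W_t(v)\le\E\lVert\eta_t(v)-\E\eta\rVert_{\mathrm{TV}}+\E\lVert\E\eta-\eta_t(v+1)\rVert_{\mathrm{TV}}$ is uniformly bounded) and is nonincreasing, makes the contradiction airtight: an infinite number of drops of size $\ge 2\mu\delta$ is incompatible with a nonnegative nonincreasing quantity. Hence $W_t(\langle u,u+1\rangle)=\lVert\eta_t(u)-\eta_t(u+1)\rVert_{\mathrm{TV}}\to 0$ a.s., as claimed.

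I expect the main technical point to be the bookkeeping in the previous paragraph — namely justifying that the local energy $\Sigma_t$ (or the global one) can only gain mass from finitely-many-per-unit-time boundary events while it is forced to lose a fixed positive amount at each of infinitely many performed updates on $e$; once that is set up the contradiction is immediate. Everything else (the $(1-2\mu)$ contraction of the density difference on the updated bond, the $\mu w$ triangle-inequality bound on neighbouring bonds, and the reduction to $\mu\in(0,\tfrac12)$) is routine and parallels the real-valued case treated in \cite{ShareDrink} and \cite{multidim}.
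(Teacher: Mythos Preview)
Your argument has a genuine gap: the $L^1$-type functional you propose, $\sum_v W_t(v)$ with $W_t(v)=\lVert\eta_t(v)-\eta_t(v+1)\rVert_{\mathrm{TV}}$, is only \emph{non}increasing under performed updates, not strictly decreasing. Your own computation shows this: the net change at a performed update is $\le -2\mu w+2\mu w=0$, and this bound is attained exactly whenever the three neighbouring densities are pointwise ordered, since then both triangle inequalities you invoke become equalities. So a performed update on $e$ can leave your $\Sigma_t$ (local or global) completely unchanged while shifting mass to the neighbouring bonds, and those bonds can later feed it back to $e$ at no cost. Nothing in the bookkeeping you describe rules out infinitely many performed updates on $e$ with TV-value $\ge\delta$ while every candidate monovariant stays constant; the ``drops of size $\ge 2\mu\delta$'' on the bond are not drops of any conserved quantity. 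Your fallback to the global sum $\sum_{v\in\Z}W_t(v)$ fails for a second, independent reason: that sum is a.s.\ infinite (by ergodicity $\tfrac1n\sum_{v=1}^n W_t(v)\to\E W_t(0)>0$), not ``a.s.\ finite by Fubini'' as you claim.

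The paper avoids both problems by working with the $L^2$ energy at \emph{vertices}, $W_t(v):=\int_0^1[f_t^{(v)}(x)]^2\,\mathrm dx$. A performed update on $\langle u,v\rangle$ lowers $W_t(u)+W_t(v)$ by exactly $2\mu(1-\mu)\int_0^1(f_{t-}^{(u)}-f_{t-}^{(v)})^2\,\mathrm dx\ge 8\mu(1-\mu)\,\lVert\eta_{t-}(u)-\eta_{t-}(v)\rVert_{\mathrm{TV}}^{2}$ and leaves $W_t$ at all other sites untouched, so the loss is \emph{strictly} positive whenever the TV-distance is. To circumvent the infinite global sum, the paper does not sum over $\Z$ but defines $W_t^{\mathrm{tot}}(v)$ as $W_t(v)$ plus the cumulative energy lost on $\langle v,v+1\rangle$ up to time $t$, and uses the Birkhoff--Khinchin ergodic theorem (together with conservation on sections bounded by edges with no Poisson event up to time $t$) to conclude $\E W_t^{\mathrm{tot}}(0)=\E W_0(0)<\infty$ for all $t$. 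Infinitely many performed updates with TV-distance $\ge\delta$ would, via conditional Borel--Cantelli, force the lost-energy term and hence $\E W_t^{\mathrm{tot}}(0)$ to diverge, giving the contradiction. The switch from $L^1$ to $L^2$ is precisely what makes the loss strictly positive, and the ergodic-theorem argument is what replaces your invalid global summation.
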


\begin{proof}
	This statement follows immediately using the energy idea used in the proofs of Thm.\ 2.3 and 5.3
	in \cite{ShareDrink}:
	Consider 
	$$W_t(v):=\int_0^1 \big[f_t^{(v)}(x)\big]^2\,\mathrm{d}x$$
	to be the energy of vertex $v$ at time $t$.
	When an update along the edge $\langle u,v \rangle$ is actually performed, i.e.\ the opinion values
	$\big(\eta_{t-}(u),\eta_{t-}(v)\big)$ get replaced by $\big(\eta_{t}(u),\eta_{t}(v)\big)=
	\big((1-\mu)\cdot\eta_{t-}(u)+\mu\cdot\eta_{t-}(v),(1-\mu)\cdot\eta_{t-}(v)+\mu\cdot\eta_{t-}(u)\big)$
	energy is lost to the amount of
	\begin{align*}
	2\mu(1-\mu)\int_0^1 \big[f_{t-}^{(u)}(x)-f_{t-}^{(v)}(x)\big]^2\,\mathrm{d}x
	&\geq 2\mu(1-\mu)\Big(\int_0^1 \big|f_{t-}^{(u)}(x)-f_{t-}^{(v)}(x)\big|\,\mathrm{d}x\Big)^2\\
	&=8\mu\,(1-\mu)\, \big\lVert\eta_{t-}(u)-\eta_{t-}(v)\big\rVert_{\mathrm{TV}}^{\quad 2}.
	\end{align*}
	As in Lemma 6.2 in \cite{ShareDrink}, define the total energy $W_t^{\mathrm{tot}}(v)$ at
	$v$ to be $W_t(v)$ plus the energy lost on $\langle v,v+1 \rangle$ until time $t$. 
	If we let $X(v)$ denote the random variable consisting of $\eta_0(v)$ and the Poisson process
	associated with the edge $\langle v,v+1\rangle$, $\{X(v),\;v\in\Z\}$ is an i.i.d.\ sequence.
	$W_t^{\mathrm{tot}}(0)$ is a measurable function of $\{X(v),\;v\in\Z\}$ and
	$\{W_t^{\mathrm{tot}}(v),\;v\in\Z\}$ its corresponding shifted equivalents. The well-known
	Pointwise Ergodic Theorem due to Birkhoff-Khinchin thus implies
	$$\E[W_t^{\mathrm{tot}}(0)]=\lim_{\substack{y\to-\infty\\z\to\infty}} \frac{1}{z-y+1}
	\sum_{u=y}^z W_t^{\mathrm{tot}}(u)\quad\text{a.s.}$$
		
	Note that there are a.s.\ infinitely many edges on which no Poisson event has occured up to time $t$
	and that on a section between two such edges, the sum of total energies is preserved until $t$.
	Putting things together, we find
	\begin{align*}\E[W_t^{\mathrm{tot}}(v)]&=\lim_{\substack{y\to-\infty\\z\to\infty}} \frac{1}{z-y+1}
	\sum_{u=y}^z W_t^{\mathrm{tot}}(u)\quad\text{a.s.}\\
	&=\lim_{\substack{y\to-\infty\\z\to\infty}} \frac{1}{z-y+1}	\sum_{u=y}^z W_0^{\mathrm{tot}}(u)\quad\text{a.s.}\\
	&=\E[W_0^{\mathrm{tot}}(v)]=\E[W_0(v)]=-\frac{8}{3\,(1-\gamma)}\,\Big(1+\frac{\ln(\gamma)}{1-\gamma}\Big).
	\end{align*}
	If we assume for contradiction that with positive probability for some $\delta>0$ the total variation distance $\lVert\eta_{t}(v)-\eta_{t}(v+1)\rVert_{\mathrm{TV}}$ lies in $[\delta,\theta]$ for arbitrarily
	large $t$, the conditional Borel-Cantelli lemma (see e.g.\ Cor.\ 6.20 in
	\cite{Foundations}) forces infinitely many performed updates with the total variation distance being
	at least $\delta$. As the total energy is always non-negative, this implies $\lim_{t\to\infty} \E[W_t^{\mathrm{tot}}(v)]=\infty$, a contradiction.
\end{proof}

\vspace*{1em}
Before we can use Lemma \ref{innerpart} to guarantee the existence of flat vertices at time $t>0$,
we need to check that \eqref{SLLNdens} also holds for time $t>0$.

\begin{lemma}\label{outerpart}
	Given the Deffuant model as described in Lemma \ref{innerpart}, for all $t\geq 0$, it
	holds that
	\begin{equation*}
	\lim_{n\to\infty} \frac1n \sum_{v=1}^n f_t^{(v)}= \phi_\gamma
	\end{equation*}
	almost surely with respect to the supremum norm on $[0,1]$.
\end{lemma}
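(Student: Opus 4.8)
The plan is to reduce the statement for general $t$ to the case $t=0$, which is exactly \eqref{SLLNdens}, i.e.\ a direct application of Theorem \ref{SLLN} to the i.i.d.\ densities $f_0^{(v)}$. Concretely, I would show that passing from the initial densities to the time-$t$ densities changes the average $\tfrac1n\sum_{v=1}^n$ by an amount that vanishes almost surely in supremum norm. Two facts drive this. \emph{(a) A uniform a priori bound:} every update replaces a pair of densities by convex combinations of themselves (or leaves them unchanged), so a trivial induction shows that $f_t^{(v)}$ is at all times a finite convex combination of initial densities $f_0^{(w)}$; since each of the latter has $\lVert f_0^{(w)}\rVert_\infty=\tfrac{2}{|y-z|}\leq\tfrac2\gamma$ by \eqref{randdens_gamma}, we obtain $\lVert f_t^{(v)}\rVert_\infty\leq\tfrac2\gamma$ for all $v$ and $t$. \emph{(b) A block-conservation property:} call an edge \emph{idle up to $t$} if no Poisson event occurs on it in $[0,t]$; if a finite block $B=\{a+1,\dots,b\}$ has both boundary edges $\langle a,a+1\rangle,\langle b,b+1\rangle$ idle up to $t$, then the opinions in $B$ evolve autonomously on $[0,t]$ (no interaction crosses $\partial B$), and since each update preserves the sum of the two opinions involved, $\sum_{v\in B}f_t^{(v)}=\sum_{v\in B}f_0^{(v)}$ pointwise in $x$.

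Next I would fix a realization and exploit that the edges are idle up to $t$ independently with probability $e^{-t}>0$; by the second Borel--Cantelli lemma there are a.s.\ infinitely many idle edges on each side of the origin, so the idle edges partition $\Z$ into finite blocks. For a given $n$, let $L\geq0$ be the number of vertices by which the block containing vertex $1$ reaches to the left of $1$, and $R_n\geq0$ the number of vertices by which the block containing $n$ reaches to the right of $n$. Then $\{1-L,\dots,n+R_n\}$ is an exact union of whole blocks, so (b) gives
\begin{equation*}
\sum_{v=1}^n f_t^{(v)}-\sum_{v=1}^n f_0^{(v)}
=\sum_{v=1-L}^{0}\big(f_0^{(v)}-f_t^{(v)}\big)+\sum_{v=n+1}^{n+R_n}\big(f_0^{(v)}-f_t^{(v)}\big),
\end{equation*}
and by (a) the supremum norm of the right-hand side is at most $\tfrac4\gamma\,(L+R_n)$.

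Finally I would dispatch the error term. The quantity $L$ is an a.s.\ finite random variable not depending on $n$, so $L/n\to0$; for $R_n$, the event $\{R_n\geq k\}$ means the $k$ edges immediately to the right of $n$ are all non-idle, which has probability $(1-e^{-t})^k$, and taking $k=\lceil c\log n\rceil$ with $c$ large makes $\sum_n(1-e^{-t})^{k(n)}<\infty$, so $R_n=O(\log n)=o(n)$ a.s.\ by Borel--Cantelli. Hence $\tfrac1n(L+R_n)\to0$ a.s., and combined with the $t=0$ case \eqref{SLLNdens} this yields $\tfrac1n\sum_{v=1}^n f_t^{(v)}\to\phi_\gamma$ a.s.\ uniformly on $[0,1]$. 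I expect the only real subtlety to be this boundary bookkeeping — making sure the two incomplete blocks at the ends of $\{1,\dots,n\}$ contribute negligibly — which is precisely what the uniform bound (a) and the $o(n)$ control on the rightmost block length are for; the remaining steps are routine.
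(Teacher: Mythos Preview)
Your proof is correct and follows essentially the same route as the paper: reduce to the $t=0$ case via the uniform bound $\lVert f_t^{(v)}\rVert_\infty\leq\tfrac2\gamma$, use conservation of the block sums between idle edges, and control the boundary contribution by a Borel--Cantelli argument. The only cosmetic difference is in the bookkeeping of the boundary term: the paper indexes the blocks as $(v_k)_{k\in\N}$ and shows that the $k$-th block length satisfies $L_k=o(k)$ a.s., whereas you work directly with the right overhang $R_n$ of the block containing $n$ and show $R_n=O(\log n)$ a.s.; both lead to the same $o(n)$ error after dividing by $n$.
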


\begin{proof}
	Fix $t>0$. From Theorem \ref{SLLN} we know that 
	$$\lim_{n\to\infty} \frac1n \sum_{v=1}^n f_0^{(v)}= \phi_\gamma\quad\text{a.s.}$$
	with respect to the supremum norm. Using the fact that the densities are uniformly bounded by $\frac{2}{\gamma}$
	we can conclude that this convergence holds even for $t>0$, by the same token as in \cite{Deffuant}:
	
	To the right of site $1$, a.s.\ there is an infinite increasing sequence of nodes $(v_k)_{k\in\N}$, such that
	there was no Poisson event up to time $t$ on the collection of edges $\{\langle v_k,v_k+1\rangle,\;k\in\N\}$.
	We denote the random lengths of the intervals in between by $L_k:=v_{k+1}-v_k,\text{ for } k\in\N$.
	In addition, let $L_0:=v_1-v_0$ be the length of the interval including agent $1$, where
	$\langle v_0,v_0+1\rangle$ is the first edge to the left of site $1$ without Poisson event.
	Independence of the involved Poisson processes entails that $(L_k)_{k\in\N_0}$ is an i.i.d.\
	sequence of random variables having geometric distribution on $\N$ with parameter $\text{e}^{-t}$.
	
	Fix $\delta>0$. Using the Borel-Cantelli lemma we find that the event
	$$E_\delta:=\{L_0=\infty\}\,\cup\, \limsup_{k\to\infty}\Big\{L_k\geq k\cdot\tfrac{\delta\gamma}{2}\Big\}$$
	has probability $0$.
	
	The Deffuant model is mass-preserving in the sense that the sum of opinions of two interacting
	agents is always preserved. Therefore it holds for all $k\in\N$:
	$$\sum_{u=v_0+1}^{v_k}f_0^{(v)}=\sum_{u=v_0+1}^{v_k}f_t^{(u)}.$$
	Furthermore, for some $v\in\{v_k+1,\dots, v_{k+1}\}$, the event
	$$\Big\lVert\frac{1}{v-v_0} \sum_{u=v_0+1}^v f_0^{(u)}-\frac{1}{v-v_0} \sum_{u=v_0+1}^v f_t^{(u)} \Big\rVert_\infty\geq \delta$$
	forces $L_k\geq k\cdot\tfrac{\delta\gamma}{2}$, since
	$v_k\geq k$ and the density $f_s^{(u)}$ is non-negative and uniformly bounded by $\tfrac{2}{\gamma}$
	for all $u\in\Z$ and times $s\geq0$.
	
	In conclusion, given $E_\delta^{\ \mathrm{c}}$, it holds that
	\begin{align*}
	\lim_{n\to\infty}\frac{1}{n}\sum_{v=1}^{n}f_t^{(v)}(x)
	&=\lim_{n\to\infty}\frac{1}{n}\sum_{v=v_0+1}^{n}f_t^{(v)}(x)
	\leq\lim_{n\to\infty}\frac{1}{n}\sum_{v=v_0+1}^{n}f_0^{(v)}(x)+\delta\\
	&=\lim_{n\to\infty}\frac{1}{n}\sum_{v=1}^{n}f_0^{(v)}(x)+\delta
	=\phi_\gamma(x)+\delta
	\end{align*}
	uniformly in $x\in[0,1]$. In the same way we get $\phi_\gamma(x)-\delta$ as a lower bound and
	letting $\delta$ go to $0$ finally verifies
	\begin{equation}
	\lim_{n\to\infty} \frac1n \sum_{v=1}^n f_t^{(v)}= \phi_\gamma\quad\text{a.s.}
	\end{equation}
	w.r.t.\ the supremum norm.
\end{proof}

\vspace*{1em}
\begin{lemma}\label{flatLa}
	Given the Deffuant model as described in Lemma \ref{innerpart} and $\epsilon>0$,
	the following holds for all $t>0$:
	\begin{enumerate}[(i)]
	\item With non-zero probability, there has been no Poisson event on the edge $\langle0,1\rangle$
	until time $t$ and site $1$ is $\epsilon$-flat to the right with respect to the
	configuration $\{\eta_t(v)\}_{v\in\Z}$ and distance measure $\lVert\,.\,\rVert_{\mathrm{TV}}$.
	\item With non-zero probability, site $0$ is two-sidedly $\epsilon$-flat with respect to the
	configuration $\{\eta_t(v)\}_{v\in\Z}$ and distance measure $\lVert\,.\,\rVert_{\mathrm{TV}}$.
	\end{enumerate}
\end{lemma}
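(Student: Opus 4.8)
The plan is to reuse the strategy behind Lemma 4.2 and Prop.\ 5.1 in \cite{ShareDrink}, but applied to the configuration at time $t>0$: I would glue a finite block of agents whose opinions have been driven to within a small total variation distance of $\E\eta_\gamma$ (Lemma \ref{innerpart}) onto an outer part whose Ces\`aro averages already converge to $\E\eta_\gamma$ (Lemma \ref{outerpart}). I carry out part (i) in detail; part (ii) is the same argument with the one-sided block $\{1,\dots,N\}$ replaced by a block $\{-N,\dots,N\}$ around the origin. Throughout, the model on $\Z$ is driven by the independent family of initial opinions $\{\eta_0(v)\}_{v\in\Z}$ and the independent Poisson processes $\{\Pi_v\}_{v\in\Z}$, where $\Pi_v$ lives on $\langle v,v+1\rangle$.

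\emph{The tail.} Fix $\epsilon>0$ and $t>0$, set $\delta:=\tfrac\epsilon4$, and for $N\in\N$ let $\tilde\eta_s$ be the Deffuant model run on $\{N+1,N+2,\dots\}$ alone, built from $\{\eta_0(v)\}_{v>N}$ and $\{\Pi_v\}_{v>N}$; since no influence can cross the edge $\langle N,N+1\rangle$, on the event $\{\Pi_N\cap[0,t]=\emptyset\}$ one has $\eta_s(v)=\tilde\eta_s(v)$ for all $v>N$ and all $s\le t$. By translation invariance this process is covered by Lemma \ref{outerpart}, so $\tfrac1m\sum_{v=N+1}^{N+m}(\tilde f^{(v)}_t-\phi_\gamma)\to0$ a.s.\ in total variation, and hence
\[
Z_N:=\sup_{m\ge1}\Big(\big\lVert{\textstyle\sum_{v=N+1}^{N+m}}(\tilde f^{(v)}_t-\phi_\gamma)\big\rVert_{\mathrm{TV}}-\tfrac\epsilon2\,m\Big)<\infty\quad\text{a.s.}
\]
Its distribution does not depend on $N$ by translation invariance, so I may fix once and for all a constant $a>0$ with $\Prob(Z_N\le a)>\tfrac12$ for every $N$.

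\emph{The block and the choice of $N$.} Choose $N$ so large that $N\ge 4a/\epsilon$ and $N$ exceeds the length of the finite section required in Lemma \ref{innerpart} for accuracy $\delta$ at time $t$. Applied to the section $\{1,\dots,N\}$, that lemma furnishes an event $\mathcal A$ with $\Prob(\mathcal A)>0$ on which (a) no Poisson event occurs on $\langle0,1\rangle$ or on $\langle N,N+1\rangle$ up to time $t$, and (b) $\lVert\eta_t(v)-\E\eta_\gamma\rVert_{\mathrm{TV}}\le\delta$ for every $v\in\{1,\dots,N\}$. The event $\mathcal A$ depends only on $\{\eta_0(v)\}_{1\le v\le N}$ and $\{\Pi_v\}_{0\le v\le N}$, whereas $\mathcal B:=\{Z_N\le a\}$ depends only on $\{\eta_0(v)\}_{v>N}$ and $\{\Pi_v\}_{v>N}$; as these input families are disjoint, $\mathcal A$ and $\mathcal B$ are independent and $\Prob(\mathcal A\cap\mathcal B)=\Prob(\mathcal A)\Prob(\mathcal B)>0$. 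On $\mathcal A$ the edge $\langle N,N+1\rangle$ is idle up to $t$, so $\eta_t(v)=\tilde\eta_t(v)$ for $v>N$ and the bound $Z_N\le a$ controls the true tail sums.

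\emph{Verification on $\mathcal A\cap\mathcal B$.} For a running average of length $L\le N$, (b) and the triangle inequality for $\tfrac12\int_0^1|\cdot|\,\mathrm{d}x$ give $\lVert\tfrac1L\sum_{u=1}^{L}f^{(u)}_t-\phi_\gamma\rVert_{\mathrm{TV}}\le\delta<\epsilon$. For $L>N$, splitting the sum at $N$ and using (b) on the first $N$ terms and $Z_N\le a$ (with $m=L-N$) on the rest,
\[
\Big\lVert{\textstyle\tfrac1L\sum_{u=1}^{L}f^{(u)}_t}-\phi_\gamma\Big\rVert_{\mathrm{TV}}\le\tfrac1L\Big(N\delta+a+\tfrac\epsilon2(L-N)\Big)\le\tfrac1L\Big(\tfrac\epsilon2 N+\tfrac\epsilon2(L-N)\Big)=\tfrac\epsilon2<\epsilon,
\]
where the middle inequality uses $N\delta+a=\tfrac\epsilon4N+a\le\tfrac\epsilon2N$, valid since $N\ge4a/\epsilon$. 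Thus every running average starting at $1$ lies in $B_\epsilon(\E\eta_\gamma)$, which together with the absence of Poisson events on $\langle0,1\rangle$ up to $t$ (part of $\mathcal A$) proves (i). The step that requires care is exactly this last one: the Ces\`aro convergence of Lemma \ref{outerpart} only controls \emph{long} running averages, so the short ones with $L$ just above $N$ would be out of control were it not for the uniform linear-slack bound hidden in $Z_N$ together with the margin $\epsilon-\delta$ bought from the block -- this is the measure-valued counterpart of the ``random walk stays below a line'' estimate underlying Lemma 4.2 in \cite{ShareDrink}. For (ii) one runs the identical scheme with the two-sided block $\{-N,\dots,N\}$ sealed off by the idle edges $\langle-N-1,-N\rangle$ and $\langle N,N+1\rangle$, a slack constant for each of the two tails, and the completely parallel estimate for two-sided running averages $\tfrac{1}{m+n+1}\sum_{u=-m}^{n}f^{(u)}_t$.
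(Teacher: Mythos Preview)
Your argument is correct and follows the same overall strategy as the paper---glue a block on which Lemma \ref{innerpart} has produced opinions close to $\E\eta_\gamma$ onto tails controlled by Lemma \ref{outerpart}---but the implementation of the gluing step differs. The paper combines the two ingredients via the \emph{local modification} coupling of Newman--Schulman \cite{NewSch}: it runs two independent copies of the full $\Z$-model, one satisfying a tail event $B$ (all Ces\`aro averages of length $\ge N$ are within $\epsilon/3$ of $\E\eta_\gamma$) together with the idle-boundary event $C$, the other satisfying the block event $D$ together with $C$; conditional on $C$ the block and its complement are independent, so the hybrid obtained by swapping in the good block is again a valid copy, witnessing $\Prob(B\cap D)>0$. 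The verification that $B\cap D$ implies $\epsilon$-flatness is then left as an ``easy exercise'' (it amounts to the same triangle-inequality split you carry out). You instead introduce the auxiliary one-sided process $\tilde\eta$ and the linear-slack variable $Z_N$, which makes the independence of $\mathcal A$ and $\mathcal B$ immediate---they are measurable with respect to disjoint input families---and renders the flatness check fully explicit. Your route is a little more elementary in that it bypasses the two-copy coupling, at the modest cost of invoking Lemma \ref{outerpart} for the one-sided process; that transfer is harmless, since one may either rerun the proof of Lemma \ref{outerpart} verbatim on the half-line or obtain it from the $\Z$-statement by conditioning on $\Pi_N\cap[0,t]=\emptyset$ (a positive-probability event independent of $\tilde\eta$).
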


\begin{proof}
	In order to verify these claims, we only have to put together the ingredients established in
	Lemmas \ref{innerpart} and \ref{outerpart}. As in the proof of Thm.\ 2.2 in \cite{Deffuant},
	we will do this by using a conditional variant of the coupling technique introduced in \cite{NewSch},
	that became known as {\em local modification} in percolation theory.
	
	Fix $\epsilon>0$. Recall that for absolutely continuous measures $\mu$ and $\nu$ on $[0,1]$, with
	densities $f$ and $g$ respectively, we get $\lVert \mu-\nu\rVert_{\mathrm{TV}}\leq\tfrac12\cdot
	\lVert f-g\rVert_\infty$ and let $B$ denote the following event:
	$$\Big\{\Big\lVert\frac1n \sum_{v=1}^n \eta_t(v)-\E\eta_\gamma\Big\rVert_{\mathrm{TV}}\leq\frac{\epsilon}{3}
	\text{ and }
	\Big\lVert\frac1n \sum_{v=-n}^{-1}\!\!\eta_t(v)-\E\eta_\gamma\Big\rVert_{\mathrm{TV}}\leq\frac{\epsilon}{3},\text{ for all }n\geq N\Big\}.$$
	
	From Lemmas \ref{innerpart} and \ref{outerpart}, we know that $N\in\N$ can be chosen sufficiently
	large such that	$\Prob(B)>1-\mathrm{e}^{-2t}$ and $\Prob(C\cap D)>0$, where $C$ denotes the event
	of no Poisson events on the two edges $\langle 0,1 \rangle$ and $\langle N,N+1 \rangle$
	up to time $t$,
	$$D:=\big\{\lVert\eta_t(v)-\E\eta_\gamma\rVert_{\mathrm{TV}}\leq \tfrac{\epsilon}{3}\text{ for all }
	1\leq v\leq N\big\}.$$
	Additionally, since $\Prob(C)=\mathrm{e}^{-2t}$, we must have $\Prob(B\cap C)>0$.
	
	Now let	$\boldsymbol{\eta}_t:=\{\eta_t(v)\}_{v\in\Z}$ and $\boldsymbol{\eta}_t':=\{\eta_t'(v)\}_{v\in\Z}$
	be the configurations at time $t$ originated from two independent copies of the considered model. There
	is a strictly positive probability that $B\cap C$ happens for $\boldsymbol{\eta}_t$ as well as that
	$C\cap D$ happens for $\boldsymbol{\eta}_t'$.
	Given $C$, the hybrid process $\tilde{\boldsymbol{\eta}}_t$ defined by
	\begin{equation*}
	\tilde{\eta}_t(v)=\begin{cases}\eta_t(v)& \text{if }v\notin\{1,\dots,N\}\\
							       \eta_t'(v)& \text{if } v\in\{1,\dots,N\} \end{cases}
	\end{equation*}
	is a perfectly fine copy of the model as well, showing that the event $B\cap D$ has non-zero
	probability. It is an easy exercise to check that $B\cap D$ actually implies the $\epsilon$-flatness
	to the right of site $1$.
	
	In fact, the same argument applies to the second setting. Here, however, we choose $C$ to be the event
	that there were no Poisson events on $\langle -N-1,-N \rangle$ and $\langle N,N+1 \rangle$ as well as
	$D:=\big\{\lVert\eta_t(v)-\E\eta_\gamma\rVert_{\mathrm{TV}}\leq \tfrac{\epsilon}{3}\text{ for all }
	-N\leq v\leq N\big\}$. Then the two-sidedly $\epsilon$-flatness of site $0$ follows from $B\cap D$.
\end{proof}

\vspace*{1em}
Let us now use Lemmas \ref{energyLa} and \ref{flatLa} to prove the main statement about the model featuring restricted
random triangular distributions.

\begin{theorem}\label{rrtd}
Consider the Deffuant model on $\Z$, in which the total variation distance is used to measure the
difference between two opinions and in which the initial opinions are given by independently
assigned random restricted triangular distributions with fixed $\gamma\in(0,1)$ as described in
\eqref{randdens_gamma}. Then there is a sharp phase transition in the following sense: for $\theta\in[0,\theta_{\mathrm{c}})$, the system almost surely approaches
disagreement in the long run; for $\theta\in (\theta_{\mathrm{c}},1]$, it almost surely approaches consensus.
The threshold $\theta_{\mathrm{c}}$ is given by
\begin{equation}
\begin{split}&\theta_{\mathrm{c}}=\frac12\,\int_0^1 \Big|f_{(\gamma,0)}(x)-\phi_\gamma(x)\Big|\;\mathrm{d}x\\
   &\phantom{\theta_{\mathrm{c}}}=\frac12\,\int_0^1 \Big|\tfrac{2}{\gamma}\cdot\big(1-\tfrac{2}{\gamma}\cdot
                    \big|x-\tfrac{\gamma}{2}\big|\big)^+-\phi_\gamma(x)\Big|\;\mathrm{d}x.\end{split}
\end{equation}
\end{theorem}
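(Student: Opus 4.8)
The plan is to first pin down what $\theta_{\mathrm{c}}$ actually is and then feed the auxiliary results of Section \ref{background} into the familiar two-sided argument. The guiding observation is that $\theta_{\mathrm{c}}$ should be the largest possible total variation distance between an admissible initial opinion and the intensity measure $\E\eta_\gamma$, i.e.
\[
\theta_{\mathrm{c}}=\sup_{y,z\in[0,1],\,|y-z|\ge\gamma}\lVert f_{(y,z)}-\E\eta_\gamma\rVert_{\mathrm{TV}},
\]
and that this supremum is attained in the limit $(y,z)\to(\gamma,0)$ (equivalently, by reflection symmetry of the model, $(y,z)\to(1,1-\gamma)$). So the first step is to establish this identity. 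Using the symmetry and strict unimodality of $\phi_\gamma$ from Lemmas \ref{phi_gamma} and \ref{monoton}, one checks that shifting the support of a triangular density towards an endpoint of $[0,1]$ and shrinking its width down to $\gamma$ both increase $\tfrac12\int_0^1|f_{(y,z)}(x)-\phi_\gamma(x)|\,\mathrm{d}x$; substituting $(y,z)=(\gamma,0)$ then produces the explicit second expression in the statement. Note also that $\theta_{\mathrm{c}}<1$, since $f_{(\gamma,0)}$ and $\phi_\gamma$ are absolutely continuous with overlapping support, so the interval $(\theta_{\mathrm{c}},1]$ is non-degenerate. This optimization is elementary but somewhat fiddly, and not the conceptual heart of the proof.

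\emph{Subcritical regime $\theta\in[0,\theta_{\mathrm{c}})$.} Here I would show that with positive probability some edge is blocked forever; since ``there is a finally blocked edge'' is a shift-invariant event and the model is stationary and ergodic, it then has probability $1$. The case $\theta=0$ is trivial, so fix $\theta\in(0,\theta_{\mathrm{c}})$, pick $\epsilon>0$ with $2\epsilon<\theta_{\mathrm{c}}-\theta$, and fix $t>0$. Imitating the local-modification construction in the proof of Lemma \ref{flatLa} (and its mirror image) one shows that, with positive probability, at time $t$ there is a vertex $v$ with no Poisson event on either incident edge up to $t$ (so $\eta_t(v)=\eta_0(v)$) and $\lVert\eta_0(v)-\E\eta_\gamma\rVert_{\mathrm{TV}}>\theta_{\mathrm{c}}-\epsilon$, while $v-1$ is $\epsilon$-flat to the left and $v+1$ is $\epsilon$-flat to the right with respect to $\{\eta_t(u)\}_{u\in\Z}$. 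These events depend on disjoint pieces of the randomness (the clocks left of $v-1$, the two clocks at $v$, the clocks right of $v+1$, and the initial opinions in the corresponding three blocks), so their conjunction has positive probability. Restarting at time $t$ and using part (i) of Lemma \ref{flat} (applied at time $t$) as in the proof of Prop.\ 5.1 in \cite{ShareDrink}, a bootstrapping argument over the Poisson events shows that both edges at $v$ stay blocked forever: while they are, $\eta_s(v)=\eta_0(v)$ and $\eta_s(v\mp1)\in B_\epsilon(\E\eta_\gamma)$, whence
\[
\lVert\eta_s(v)-\eta_s(v-1)\rVert_{\mathrm{TV}}\ge\lVert\eta_0(v)-\E\eta_\gamma\rVert_{\mathrm{TV}}-\epsilon>\theta_{\mathrm{c}}-2\epsilon>\theta
\]
for all $s\ge t$, and likewise for $\langle v,v+1\rangle$.

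\emph{Supercritical regime $\theta\in(\theta_{\mathrm{c}},1]$.} Here the target is $\eta_s(u)\to\E\eta_\gamma$ in total variation, a.s., for every $u$. The key preliminary remark is that, by the (purely linear, hence directly transferable) SAD-duality of Lemma 3.1 in \cite{ShareDrink}, every $\eta_s(u)$ is a convex combination of admissible initial opinions, so convexity of the total variation norm gives $\lVert\eta_s(u)-\E\eta_\gamma\rVert_{\mathrm{TV}}\le\theta_{\mathrm{c}}$ for all $s,u$. Now fix $\epsilon>0$ with $7\epsilon<\theta-\theta_{\mathrm{c}}$ and $t_0>0$. By Lemma \ref{flatLa}(ii) together with stationarity and ergodicity, a.s.\ the configuration $\{\eta_{t_0}(v)\}_{v\in\Z}$ has infinitely many two-sidedly $\epsilon$-flat vertices $\dots<w_{-1}<w_0<w_1<\dots$ on each side of the origin, and Lemma \ref{flat}(ii) (applied at time $t_0$) gives $\eta_s(w_i)\in B_{6\epsilon}(\E\eta_\gamma)$ for all $s\ge t_0$. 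Then the edge $\langle w_i,w_i+1\rangle$ has total variation distance at most $6\epsilon+\theta_{\mathrm{c}}<\theta$ for $s\ge t_0$, so it is never blocked; carrying infinitely many Poisson events a.s., it sees infinitely many performed updates, and Lemma \ref{energyLa} yields $\lVert\eta_s(w_i)-\eta_s(w_i+1)\rVert_{\mathrm{TV}}\to0$, hence $\limsup_{s\to\infty}\lVert\eta_s(w_i+1)-\E\eta_\gamma\rVert_{\mathrm{TV}}\le6\epsilon$. Iterating this across the finite block $[w_i,w_{i+1}]$ — at each step the next edge is eventually never blocked because its left endpoint is eventually within $6\epsilon+\epsilon$ and its right endpoint within $\theta_{\mathrm{c}}$ of $\E\eta_\gamma$, and subadditivity of $\limsup$ re-establishes the clean bound $6\epsilon$ after each step — propagates $\limsup_{s}\lVert\eta_s(u)-\E\eta_\gamma\rVert_{\mathrm{TV}}\le6\epsilon$ to every $u\in[w_i,w_{i+1}]$, hence to every $u\in\Z$. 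Letting $\epsilon\downarrow0$ along a sequence gives $\eta_s(u)\to\E\eta_\gamma$ for all $u$, a.s.; this is consensus with the limit prescribed in Definition \ref{states}(ii), and it also covers $\theta=1$, where every Poisson event is automatically a performed update and the finite second moment (the energy computed in Lemma \ref{energyLa} is finite) keeps the energy argument valid.

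The step I expect to be the real obstacle is the supercritical propagation: running the energy-plus-flatness argument edge by edge along a random but a.s.\ finite block while keeping the accumulated error uniformly at $6\epsilon$ — rather than letting it grow with the unbounded block length — and ensuring the ``eventually unblocked'' windows of consecutive edges mesh. The identification of $\theta_{\mathrm{c}}$ with the extremal total variation distance is the other place where some genuine, if elementary, computation is unavoidable.
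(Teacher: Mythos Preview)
Your proposal is correct and follows essentially the same route as the paper: identify $\theta_{\mathrm{c}}$ as the maximal total variation distance from an admissible triangular density to $\E\eta_\gamma$ via Lemma~\ref{monoton}, use Lemma~\ref{flatLa}(i) plus local modification for the subcritical blocking argument, and in the supercritical regime combine two-sided $\epsilon$-flatness (Lemma~\ref{flatLa}(ii)), the uniform bound $\lVert\eta_s(u)-\E\eta_\gamma\rVert_{\mathrm{TV}}\le\theta_{\mathrm{c}}$ from convexity, Lemma~\ref{flat}(ii), and the energy Lemma~\ref{energyLa}, then propagate by induction and conclude via ergodicity. Your closing worry is unfounded: the $\limsup$-subadditivity you already invoke keeps the bound at $6\epsilon$ without accumulation, and the ``eventually unblocked'' times $T_k$ may grow with $k$ but this is harmless since each vertex is only finitely many steps from a flat one---the paper handles this identically, just more tersely (``By induction, this is actually true for any pair of neighbors'').
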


\begin{proof}
As mentioned in Section \ref{background}, we will closely follow the ideas in \cite{ShareDrink} to
establish this result, just now the opinions are given by (random) absolutely continuous measures,
or rather their density functions.
In fact, most of the work has already been done by showing Lemma \ref{flatLa}. 
Let us define
 $$\theta_{\mathrm{c}}:=\frac12\,\int_0^1 \Big|f_{(\gamma,0)}(x)-\phi_\gamma(x)\Big|\;\mathrm{d}x$$
and $\epsilon:=\tfrac17\cdot|\theta_{\mathrm{c}}-\theta|$. In the sequel, we will consider the two regimes:
the subcritical one ($\theta<\theta_{\mathrm{c}}$) and the supercritical one ($\theta>\theta_{\mathrm{c}}$).
\vspace*{1em}

In the subcritical regime, fix $t>0$ and let $B$ denote the event that there are no Poisson events
neither on $\langle -1,0 \rangle$ nor on $\langle 0,1 \rangle$ during $[0,t]$ and site $-1$ is
$\epsilon$-flat to the left, site $1$ is $\epsilon$-flat to the right with respect to the configuration $\{\eta_t(v)\}_{v\in\Z}$.
By Lemma \ref{flatLa} part (i), the obvious symmetry and conditional independence, we know that $B$ occurs
with positive probability.
Let the initial opinion of agent $0$ be given by $f_{U_0}$ in the sense of \eqref{initdens} and $C$ be
the event that the first coordinate of $U_0$ is less than $\gamma+\tfrac{\epsilon\gamma}{2}$, which
by the shape of $A_\gamma$ (see Figure \ref{T_gamma}) and \eqref{estimate} entails 
$$\big\lVert\eta_0(0)-\E\eta_\gamma\big\rVert_{\mathrm{TV}}\geq \theta_{\mathrm{c}}-\epsilon.$$
Given that there are no Poisson events on edges incident to site $0$, we can (again by local modification)
conclude that $B\cap C$ has positive probability. From Lemma \ref{flat}, we know that the total
variation distance between $\eta_s(1)$ and the intensity measure $\E\eta_\gamma$ will not exceed
$\epsilon$ for $s\geq t$ due to its one-sided $\epsilon$-flatness if there is no interaction with
site $0$ (same for $\eta_s(-1)$). However, given $B\cap C$ the opinions $\eta_t(1)$ and $\eta_t(0)=\eta_0(0)$
are at distance larger than $\theta_{\mathrm{c}}-2\epsilon>\theta$ and hence they never will be close
enough to interact, since the same holds for $\eta_t(-1)$, which leaves the opinion at site $0$ unchanged
for all time. In other words, with non-zero probability the edge $\langle 0,1\rangle$ will be finally blocked
(same for $\langle -1,0\rangle$).

To conclude the claimed almost sure behavior, we apply the ergodicity argument used in the proof of Lemma
\ref{energyLa} once again: Whether the configuration approaches disagreement or not can be checked given
the initial configuration plus all Poisson processes associated to the edges. The sequence $\{X(v),\;v\in\Z\}$
(as defined in the proof of Lemma \ref{energyLa}) is i.i.d., hence ergodic with respect to shifts. Thus the
translation-invariant event ``disagreement'' necessarily has to be trivial, i.e.\ must have probability
either $0$ or $1$. Since we already showed that its probability is non-zero, the event has to be an almost
sure one in the subcritical regime.

In the supercritical case, we know that at time $t>0$ any fixed site is two-sidedly $\epsilon$-flat with
positive probability (part (ii) of Lemma \ref{flatLa}). Lemma \ref{monoton} implies that the largest
total variation distance of a restricted triangular distribution as defined in \eqref{randdens_gamma}
to the intensity measure $\E\eta_\gamma$ is given by what we defined as $\theta_{\mathrm{c}}$. Since all
opinions at a later time are convex combinations of the initial ones, this distance can not be exceeded.
If site $0$ is two-sidedly $\epsilon$-flat with respect to the configuration $\{\eta_t(v)\}_{v\in\Z}$, 
the opinion $\eta_s(0)$ will be at total variation distance at most $6\epsilon$ to $\E\eta_\gamma$, for
all $s\geq t$ (part (ii) of Lemma \ref{flat}). This implies that its neighboring opinions differ by not
more than $\theta_{\mathrm{c}}+6\epsilon<\theta$, hence Lemma \ref{energyLa} forces the differences
to converge to $0$. By induction, this is actually true for any pair of neighbors. Again, ergodicity
ensures that this consensus behavior occurs with probability 1.

It further implies the almost sure existence of two-sidedly $\epsilon$-flat vertices for any strictly
positive value of $\epsilon$. For this reason, the measure, the opinions converge to, must be the intensity
measure, which concludes the proof.
\end{proof}
\vspace*{1em}

\begin{example}
Let us consider the Deffuant model on $\Z$ with opinions being absolutely continuous probability
distributions and the initial ones given by random restricted triangular distributions with parameter
$\gamma=\tfrac13$. From Lemma \ref{phi_gamma} we know that the corresponding intensity measure
has the somewhat cumbersome density

\begin{equation*}
\phi_{\tfrac13}(x)=\begin{cases}
-18\,\big[(1-x)\,\ln(1-x)-\tfrac32\,x^2+x\big],& \,0\leq x\leq \tfrac16\\
-18\,\big[(1-x)\,\ln(1-x)-x\,\ln(6x)+x+\tfrac32\,x^2-\tfrac{1}{12}\big],& \tfrac16\leq x\leq \tfrac13 \\
-18\,\big[(1-x)\,\ln(1-x)+x\,(1-\ln(2))+\tfrac{1}{12}\big],& \tfrac13\leq x\leq \tfrac12 \\
-18\,\big[x\,\ln(x)+(1-x)\,(1-\ln(2))+\tfrac{1}{12}\big],& \tfrac12\leq x\leq \tfrac23 \\
-18\,\big[x\,\ln(x)-(1-x)\,\ln(6-6x)+\tfrac32\,x^2-4x+\tfrac{29}{12}\big],
& \tfrac23\leq x\leq \tfrac56 \\
-18\,\big[x\,\ln(x)-\tfrac32\,x^2+2x-\tfrac{1}{2}\big],& \tfrac56\leq x\leq 1.
\end{cases}
\end{equation*}
\vspace*{-1.5em}
   	\begin{figure}[H]
   		\hspace*{3.2cm}
   		\includegraphics[scale=0.8]{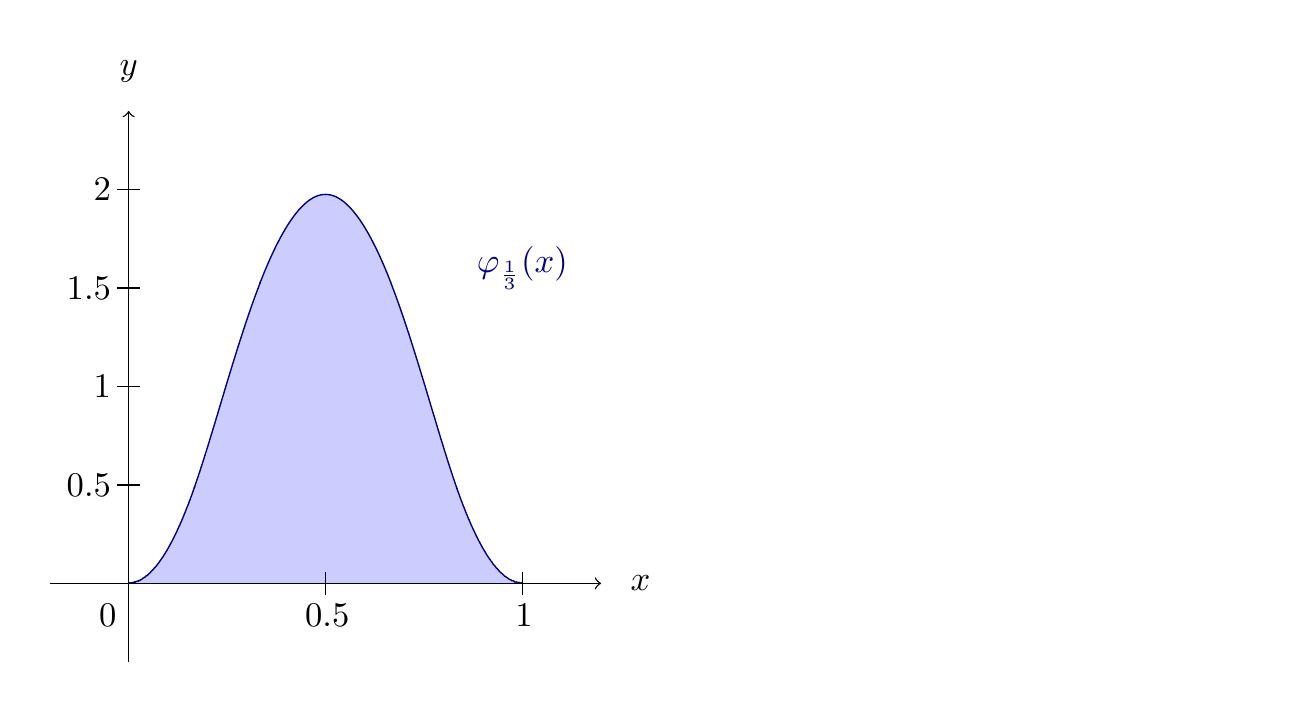}\vspace*{-0.5em}
   		\caption{The intensity measure is concentrated more towards the center if the random triangular
   			distributions are restricted to a minimum width.}\label{exampl}
   	\end{figure}

If the total variation distance is used to measure the disparity of two opinions, we
can conclude from Theorem \ref{rrtd} that the threshold $\theta_{\mathrm{c}}$ for this model takes
on the value 
$$\theta_{\mathrm{c}}=\frac12\int_0^1 \Big|f_{\big(0,\tfrac13\big)}(x)-\phi_{\tfrac13}(x)\Big|\;\mathrm{d}x
					 \approx 0.83172.$$

\end{example}

\section{Alternative choices (concerning initial configuration and distance measure) and inhomogeneous
	open-mindedness}

No doubt that the extension of the Deffuant model on $\Z$ to measure-valued opinions leaves a wide range
of possible laws for the initial configuration to be examined. We saw trivial behavior for triangular
distributions and a non-trivial phase transition for restricted triangular distributions.

If we want to stick to absolutely continuous measures on a compact support $S$ and
$\lVert\,.\,\rVert_{\mathrm{TV}}$ as distance measure, the line of argument from Section \ref{sec:rrtd}
will in principle carry over on condition that
$$\E\int_S [f_\omega(x)]^2\,\mathrm{d}x<\infty,$$
where $f_\omega,\ \omega\in\Omega,$ denotes the random initial density shaped by a probability
space $(\Omega,\Prob)$, and that the total variation distance of an initial opinion to the intensity
measure is a.s.\ bounded away from $1$. The first condition is needed to prove Lemma \ref{energyLa},
the latter will in fact give the threshold $\theta_{\mathrm{c}}$, as essential supremum of the
total variation distance of initial opinions to the intensity measure.
There is however one issue, that must not be overlooked: In order to establish Lemma \ref{innerpart},
we needed that there are no major gaps in the support of the initial opinions -- just as in the case
of finite-dimensional opinions. To examine this problem more closely, elaborate geometric considerations
as in \cite{multidim} seem to be necessary and we will thus leave this for future studies.

If one wants to include point processes as opinions, in many situations the total variation distance
will not work as a meaningful measure for the discrepancy of two opinions, at least if the support
of two such processes is disjoint with positive probability. Using the Lévy-distance instead could
however lead to interesting models in such a setting. In fact, even in the case of triangular
distributions, it seems to be unrealistic that two determined agents are at maximal distance, whether
the intervals, on which their opinions are concentrated, are in close proximity or at different ends
of the spectrum. From this point of view, albeit more difficult to handle, the Lévy-distance appears
to be a more suitable choice.

Finally, it might be interesting to point out the new feature of the model (compared to finite-dimensional
opinions) that was mentioned in the beginning of Section \ref{sec:td} once more and put it into a broader
context: In the Deffuant model with triangular distributions, besides the common tolerance parameter
$\theta$ and willingness to compromise $\mu$, we have a diversified scale of open-mindedness of the agents
shaped by the random support of their initial opinion.

There have been attempts, e.g.\ by Deffuant et al.\ in \cite{heterogeneous}, to simulate the long-term
behavior of a variant of the model in which the agents have different $\theta$-values. On the analytical
side, this is quite a crucial change since it brings along situations in which the opinion of only one
of two interacting neighbors is updated. Then the sum of opinions is no longer preserved, which renders
void many of our central arguments.
Another advance in the same direction is the so-called {\em relative agreement model}, introduced in
\cite{RA}. There, the bounded confidence rule is dropped and replaced by a continuous counterpart:
Agents feature both a real-valued opinion and a separate value corresponding to their
individual uncertainty, which taken together shape a dispersed opinion in the form of a symmetric interval
of length two times the uncertainty around the opinion value. If an agent gets influenced by another, the
impact depends on the overlap of the two opinion intervals relative to the length of the interval
corresponding to the influenced agent. Again, the asymmetric way of updating opinion values makes the
relative agreement model, although based on the same principles and ideas, qualitatively quite different.

On the modelling side, the way inhomogeneous open-mindedness or uncertainty is incorporated in our extension of the
model does not only avoid this issue, but also lead to the realistic property that agents themselves
become more open-minded by interacting with open-minded neighbors.

\subsection*{Acknowledgements}

I am very grateful to my supervisor Olle Häggström for his valuable comments to an earlier draft and
his constant support. Furthermore, I would like to thank the anonymous referee reviewing the paper
dealing with higher-dimensional opinion spaces \cite{multidim}, who suggested to carry on the arguments
beyond finite dimensionality.


\vspace{0.5cm}
\makebox[0.8\textwidth][l]{
	\begin{minipage}[t]{\textwidth}
	{\sc \small Timo Hirscher\\
   Department of Mathematical Sciences,\\
   Chalmers University of Technology,\\
   412 96 Gothenburg, Sweden.}\\
   hirscher@chalmers.se
	\end{minipage}}

\end{document}